\newtheorem{theorem}{Theorem}[section]
\theoremstyle{definition}
\newtheorem{definition}[theorem]{Definition}
\theoremstyle{remark}
\newtheorem{remark}[theorem]{Remark}
\numberwithin{equation}{section}
\begin{document}
\title{Hessian Recovery for Finite Element Methods}

%    Information for first author
\author{Hailong Guo}
%    Address of record for the research reported here
\address{Department of Mathematics, Wayne State University, Detroit,
MI 48202}
\email{guo@math.wayne.edu}
%    \thanks will become a 1st page footnote.
\thanks {This work is supported in part by the US National Science Foundation
through grant 1115530.}

%    Information for second author
\author{Zhimin Zhang}
\address{Department of Mathematics, Wayne State University, Detroit,
MI 48202}
\email{zzhang@math.wayne.edu}
%\thanks{Support information for the second author.}

\author{Ren Zhao}
\address{Department of Mathematics, Wayne State University, Detroit,
MI 48202}
\email{rzhao@math.wayne.edu}

%    General info
\subjclass[2000]{Primary 65N50, 65N30; Secondary 65N15}

%\date{July 17, 2014.}

%\dedicatory{This paper is dedicated to our advisors.}

\keywords{Hessian recovery, gradient recovery, ultraconvergence, superconvergence,
 finite element method, polynomial preserving}

\begin{abstract}
    In this article, we propose  and analyze an effective Hessian recovery
    strategy for the Lagrangian finite element method of arbitrary order.
    We prove  that the proposed Hessian recovery method
    preserves polynomials of degree
    $k+1$ on general unstructured meshes and superconverges at a  rate
    of $O(h^k)$ on mildly structured meshes. In addition, the method
    is proved to be ultraconvergent (two order higher) for translation
  invariant finite element space of any order.
    Numerical examples are presented to support our  theoretical   results.
\end{abstract}

\maketitle

\section{Introduction}
Post-processing is an important technique in scientific computing, where
it is necessary to draw some useful
information that have physical meanings such as velocity, flux,
stress, etc., from the  primary results of the computation.
These  quantities of interest usually involve derivatives of the
primary data.
Some popular post-processing techniques include the celebrated
Zienkiewicz-Zhu superconvergent patch recovery (SPR) \cite{zz1992},
polynomial preserving recovery  (PPR) \cite{zhang2005, naga2005}, and
edge based recovery \cite{pouliot2013}, which were proposed to
  obtain accurate gradients with reasonable cost.
  Similarly, post-processing for second order derivatives, which are
  related to physical quantities such as momentum and Hessian, are
  also desirable.  Hessian matrix is particularly
 significant in adaptive mesh design, since it can indicate the direction where the function changes the most
 and guide us to construct anisotropic meshes to cope with the
 anisotropic properties of
 the solution of the underlying partial differential equation \cite{bank2003,cao2013}.
It also plays an important role in finite element approximation of
second order non-variational elliptic problems \cite{lakkis2011},
numerical solution of some fully nonlinear equations such
as Monge-Amp$\grave{\text{e}}$re equation
\cite{lakkis2013, neilan}, and designing nonlocal
finite element technique \cite{gan}.

 There have been some works in literature on this subject. In 1998, Lakhany-Whiteman  used a simple averaging  twice at edge centers of the regular uniform
 triangular mesh  to produce a superconvergent Hessian  \cite{lakhany1998}.
Later, some other reseachers such as Agouzal et al. \cite{agouzal2002} and
Ovall \cite{ovall2008} also studied Hessian recovery.
Comparsion studies of existing Hessian recovery techniques are found in
Vallet et al. \cite{vallet2007} and Picasso et al. \cite{picasso2011}.
However, there is no systematic theory guarantees  convergence in
general circumstances.
Moreover, there are certain technical difficulties in obtaining rigorous
convergence proof for meshes other than the regular pattern triangular mesh.
In a very recent work, Kamenski-Huang argued that it is not necessary to
have very accurate or even convergent
Hessian in order to obtain a good mesh \cite{Kamenski2012}.

Our current work is not targeted on
 the direction of adaptive mesh refinement; instead, our emphasis is to
 obtain accurate Hessian matrices via recovery techniques.
 We propose an effective Hessian recovery method and establish a solid
 theoretical analysis for such a recovery method.
 Our approach is to apply PPR twice to the primarily  computed data.
This idea is natural. However, the mathematical
theory behind is non-trivial and quite involved, especially
in the ultraconvergence analysis of the recovered Hessian.
A direct calculation of the
gradient from the linear finite element space has linear convergent rate and
the Hessian has no convergence
at all. Our Hessian recovery can achieve second-order convergence
under some uniform meshes, which is a very surprising result!

\section{Preliminaries}
In this section, we first introduce some frequently used notation and
then briefly describe the polynomial preserving recovery (PPR)
operator \cite{zhang2005, naga2005}, which is the
basis of our  Hessian recovery method.
\subsection{Notation}
Let $\Omega$ be a bounded polygonal domain with  Lipschitz boundary
$\partial \Omega$ in $\mathbb{R}^2$. Throughout this article, the standard
notation for classical Sobolev spaces and their associate norms are
adopted as in \cite{brenner2008, ciarlet1978}. A multi-index $\alpha$ is a
$2$-tuple of non-negative integers
$\alpha_i$, $i=1,2$. The length of $\alpha$ is
given by
\begin{equation*}
    |\alpha|=\sum_{i=1}^{2}\alpha_i.
\end{equation*}
For $u\in W^k_p(\Omega)$ and $|\alpha|\le k$, denote $D^{\alpha}u$ the
weak partial derivative $(\frac{\partial}{\partial x})^{\alpha_1}
(\frac{\partial}{\partial y})^{\alpha_2}u$.
Also, $D^{k}u $ with $|\alpha|=k$ is the vector of all
partial derivatives of order $k$. The Hessian operator
$H$ is denoted by
\begin{equation}
    H=
    \begin{pmatrix}
	\partial_{xx}&\partial_{xy}\\
	\partial_{yx}&\partial_{yy}
    \end{pmatrix}.
    \label{secondorder}
\end{equation}
For a subdomain $\mathcal{A}$
of $\Omega$, let $\mathbb{P}_m(\mathcal{A})$ be the space of polynomials of
degree less than or equal to $m$ over $\mathcal{A}$ and $n_m$ be the
dimension of $\mathbb{P}_m(\mathcal{A})$ with $n_m=\frac{1}{2}(m+1)(m+2)$.
$W^k_p(\mathcal{A})$ denotes the classical Sobolev space with norm
$\|\cdot\|_{k, p, \mathcal{A}} $ and seminorm $|\cdot|_{k, p, \mathcal{A}}$.
 When $p = 2$, we denote simply $H^{k}(\mathcal{A}) =
W^{k}_{2}(\mathcal{A})$ and the subscript $p$ is omitted.

For any $0<h< \frac{1}{2}$,
let $\mathcal{T}_h$ be a shape regular triangulation of $\bar{\Omega}$
 with mesh size at most $h$, i.e.
\begin{equation*}
\bar{\Omega}=\bigcup_{K\in\mathcal{T}_h} K,
\end{equation*}
where $K$  is a triangle. For any $k\in \mathbb{N}$, define the
continuous finite element space $S_h$ of order $k$ as
\begin{equation*}
    S_h=\{v\in C(\bar{\Omega}): v|_{K}\in \mathbb{P}_k(K),
    \quad \forall K\in \mathcal{T}_h\}\subset H^1(\Omega).
\end{equation*}
Let $\mathcal{N}_h$ denote the set of mesh nodes, i.e.
the dual space of $S_h$. The  standard Lagrange basis of $S_h$ is denoted by
 $\{\phi_z: z\in \mathcal{N}_h\}$
with $\phi_z(z')=\delta_{zz'}$ for all $z, z'\in \mathcal{N}_h$.
 For any $v\in H^1(\Omega)\cap
C(\Omega)$, let $v_I$ be the interpolation of $v$ in $S_h$,
i.e.,$v_I=\sum\limits_{z\in\mathcal{N}_h}v(z)\phi_z$.

For $\mathcal{A}\subset \Omega$, let $S_h(\mathcal{A})$ denote the restrictions
of functions in $S_h$ to $\mathcal{A}$ and let $S_h^{\text{comp}}(\mathcal{A})$ denote the set of those functions
in $S_h(\mathcal{A})$ with compact support in the interior of $\mathcal{A}$
\cite{wahlbin1995}. Let
$ \Omega_0\subset\subset \Omega_1 \subset\subset \Omega_2 \subset\subset\Omega$
be  separated by $d\ge c_oh$ and  $\ell$ be a direction, i.e., a unit vector
in $\mathbb{R}^2$.
Let $\tau$ be a parameter, which will typically be a multiply of $h$.
Let $T^{\ell}_{\tau}$ denote translation by $\tau$ in the direction $\ell$,
i.e.,
\begin{equation}
  T^{\ell}_{\tau}v(x) = v(x+\tau\ell),
  \label{equ:translation}
\end{equation}
and for an integer $\nu$
\begin{equation}
  T^{\ell}_{\nu\tau}v(x) = v(x+\nu\tau\ell).
  \label{trans}
\end{equation}
Following the definition of \cite{wahlbin1995}, the finite element space
$S_h$ is called translation invariant by $\tau$ in the direction $\ell$ if
\begin{equation}
  T^{\ell}_{\nu \tau}v\in S^{\text{comp}}_h(\Omega), \quad
  \forall v \in S^{\text{comp}}_h(\Omega_1),
\end{equation}
for some integer $\nu$ with $|\nu| < M$.  Equivalently,
$\mathcal{T}_h$ is called a translation invariant mesh. To clarify the matter,
we consider   five popular triangular mesh patterns: Regular, Chevron,
Union-Jack, Criss-cross, and equilateral patterns, as shown in
Figure \ref{mesh}.

\begin{figure}[!h]
  \centering
  \subfigure[]{
  \begin{minipage}[c]{0.2\textwidth}
  \centering
  \includegraphics[width=0.9\textwidth]{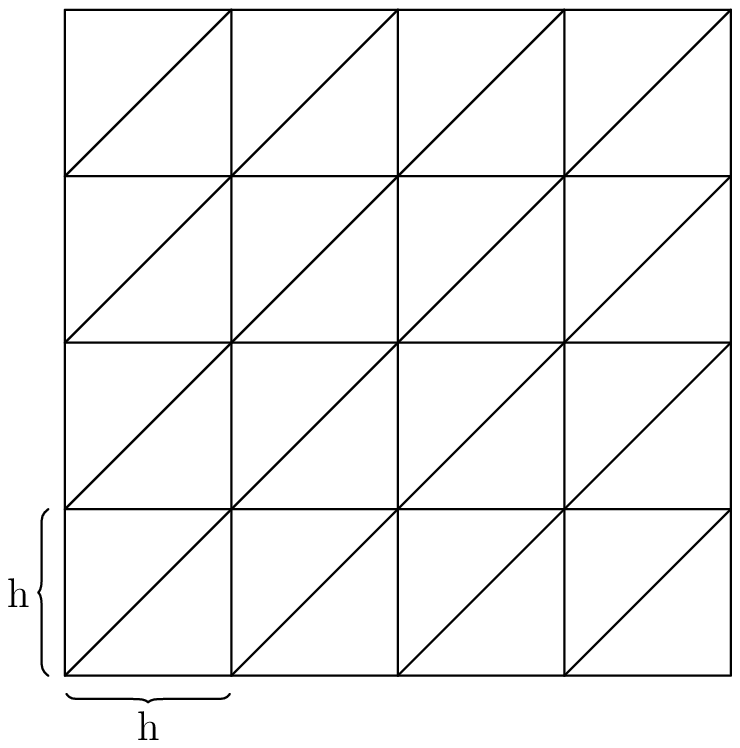}
\label{regularmesh}
\end{minipage}}%
\subfigure[]{
 \begin{minipage}[c]{0.2\textwidth}
  \centering
  \includegraphics[width=0.9\textwidth]{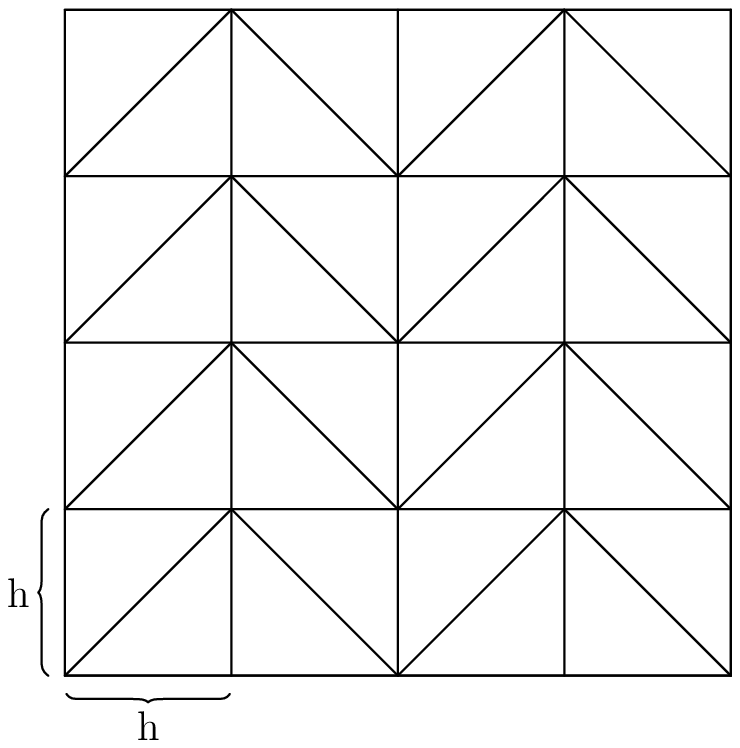}
\label{chevronmesh}
\end{minipage}}%
\subfigure[]{
\begin{minipage}[c]{0.2\textwidth}
  \centering
  \includegraphics[width=0.9\textwidth]{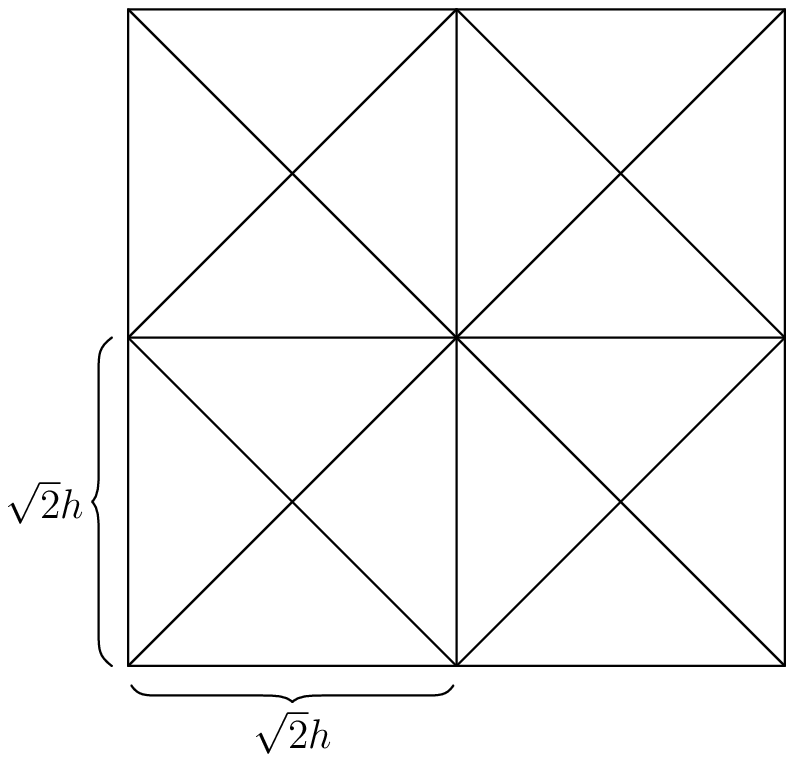}
\label{crisscrossmesh}
\end{minipage}}%
\subfigure[]{
\begin{minipage}[c]{0.2\textwidth}
  \centering
  \includegraphics[width=0.9\textwidth]{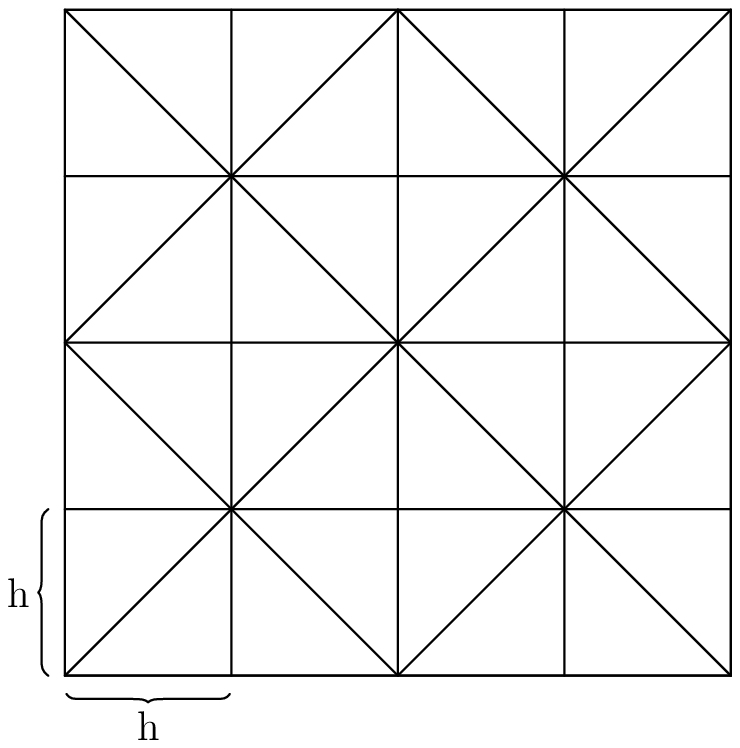}
\label{unionjackmesh}
\end{minipage}}%
\subfigure[]{
\begin{minipage}[c]{0.2\textwidth}
  \centering
  \includegraphics[width=0.9\textwidth]{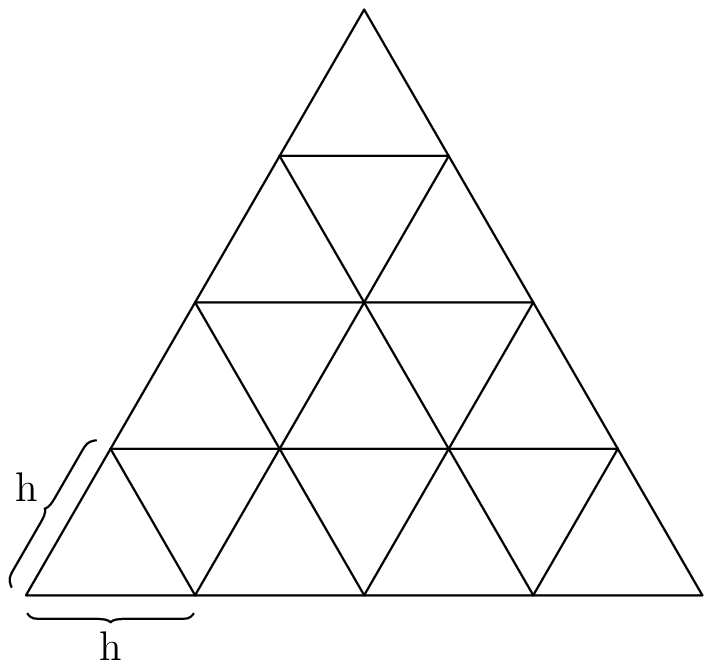}
\label{equallatermesh}
\end{minipage}}
  \caption{Five types of uniform meshes: (a) Regular pattern;
  (b) Chevron pattern; (c) Criss-cross pattern; (d) Union-Jack pattern;
  (e) Equilateral pattern}
  \label{mesh}
\end{figure}
We see that:

1) Regular pattern is translation invariant by $h$ in directions $(1,0)$ and $(0,1)$,
by $2\sqrt{2}h$ in directions $(\pm \frac{\sqrt{2}}{2},\frac{\sqrt{2}}{2})$,
and by $\sqrt{5}h$ in directions
$(\frac{2\sqrt{5}}{5},\pm \frac{\sqrt{5}}{5})$ and $(\pm \frac{\sqrt{5}}{5},\frac{2\sqrt{5}}{5})$, ......

2) Chevron pattern is translation invariant by $h$ in the direction $(0,1)$,
by $2h$ in the direction $(1,0)$, and by $2\sqrt{2}h$ in directions
$(\pm \frac{\sqrt{2}}{2},\frac{\sqrt{2}}{2})$,
and by $\sqrt{5}h$ in directions $(\pm \frac{\sqrt{5}}{5},\frac{2\sqrt{5}}{5})$, ......

3) Criss-cross pattern is translation invariant by $\sqrt{2}h$ in directions $(1,0)$ and $(0,1)$,
and by $2h$ in directions
$(\pm \frac{\sqrt{2}}{2},\frac{\sqrt{2}}{2})$, ......

4) Union-Jack pattern is translation invariant by $2h$ in directions $(1,0)$ and $(0,1)$,
and by $2\sqrt{2}h$ in directions
$(\pm \frac{\sqrt{2}}{2},\frac{\sqrt{2}}{2})$, ......

5) Equilateral pattern is translation invariant by $h$ in directions $(1,0)$ and $(\pm \frac{1}{2},\frac{\sqrt{3}}{2})$,
and by $\sqrt{3}h$ in directions $(0,1)$ and $(\frac{\sqrt{3}}{2},\pm \frac{1}{2})$, ......

Throughout  this article, the letter $C$ or $c$, with or without subscript,
denotes  a generic constant which is independent of $h$ and may not be the same
at each occurrence. To simplify notation,  we  denote $x\le Cy$ by $x\lesssim y$.

\subsection{Polynomial preserving recovery}
Let $G_h: S_h\rightarrow S_h\times S_h$ be the PPR operator.
Given a function $u_h\in S_h$, it suffices to define $(G_hu_h)(z)$
for all $z\in \mathcal{N}_h$.
Let $z\in\mathcal{N}_h$ be a vertex and $\mathcal{K}_z$ be a patch of elements
around $z$ which is defined in \cite{zhang2005, naga2005}.
Select all nodes in $\mathcal{N}_h\cap \mathcal{K}_z$ as sampling
points  and fit a polynomial $p_z\in \mathbb{P}_{k+1}(\mathcal{K}_z)$ in  the least
squares sense at those sampling points, i.e.
\begin{equation}
  p_z=\arg \min_{p\in \mathbb{P}_{k+1}(\mathcal{K}_z)}
    \sum_{\tilde{z}\in\mathcal{N}_h\cap \mathcal{K}_z}(u_h-p)^2(\tilde{z}).
    \label{least}
\end{equation}
Then the recovered gradient at $z$ is defined as
\begin{equation*}
    (G_hu_h)(z) = \nabla p_z(z).
\end{equation*}
For linear element, all nodes in $\mathcal{N}_h$ are vertices and hence
$G_hu_h$ is well defined. However, $\mathcal{N}_h$ may contain
edge nodes or interior nodes for higher order elements.
If  $z$ is an edge node which lies on an edge between
two vertices $z_1$ and $z_2$, we define
\begin{equation*}
    (G_hu_h)(z)=\beta\nabla p_{z_1}(z) + (1-\beta)\nabla p_{z_2}(z)
\end{equation*}
where $\beta$ is determined by the ratio of distances of $z$ to
$z_1$ and $z_2$. If $z$ is an interior node which lies in a triangle
formed by three vertices $z_1$, $z_2$, and $z_3$, we define
\begin{equation*}
    (G_hu_h)(z)=\sum_{j=1}^{3}\beta_j\nabla p_{z_j}(z),
\end{equation*}
where $\beta_j$ is the barycentric coordinate of $z$.
\begin{remark}
     It was proved in \cite{naga2004} that certain rank condition and geometric condition
    guarantee the uniqueness of $p_z$ in \eqref{least}.
\end{remark}
\begin{remark}
    In order to avoid numerical instability,  a discrete least squares fitting
    process is carried out on a reference  patch $\omega_{z}$ .
\end{remark}
\section{Hessian recovery method}
Given $u\in S_h$, let $G_hu \in S_h\times S_h$ be the recovered gradient
using PPR as defined in previous section. We rewrite $G_hu$ as
\begin{equation}
    G_hu=
    \begin{pmatrix}
	G_h^xu\\
	G_h^yu
    \end{pmatrix}.
    \label{rgrad}
\end{equation}
In order to recover the Hessian  matrix of $u$, we apply gradient recovery
operator $G_h$ to $G_h^xu$ and $G_h^yu$ one more time, respectively, and
define the Hessian recovery operator $H_h$ as follows
\begin{equation}
    H_hu=
    \begin{pmatrix} G_h(G_h^xu),&G_h(G_h^yu)
    \end{pmatrix}
    =
    \begin{pmatrix}
	G_h^x(G_h^xu)&G_h^x(G_h^yu)\\
	G_h^y(G_h^xu)&G_h^y(G_h^yu)
    \end{pmatrix}.
    \label{hessian}
\end{equation}
Just as PPR, we obtain $H_h: S_h\rightarrow  S_h^2\times S_h^2$
on the whole domain $\Omega$ by interpolation after determining values of
$H_hu$ at all nodes in $\mathcal{N}_h$.
\begin{remark}
    The two gradient  recovery operators  in definition \eqref{hessian} of
    $H_h$ can be different. Actually we can define
 the Hessian recovery operator $H_h$ as following
\begin{equation*}
    H_hu=
    \begin{pmatrix} \tilde{G}_h(G_h^xu),&\tilde{G}_h(G_h^yu)
    \end{pmatrix}.
\end{equation*}
By choosing $G_h$ and $\tilde{G}_h$ as PPR or SPR operator,  we  obtain
four different Hessian recovery operators, i.e., PPR-PPR, PPR-SPR, SPR-PPR, and
SPR-SPR. However, numerical tests  have shown  that PPR-PPR is the best one.
\label{fourmethods}
\end{remark}

In order to demonstrate our method, we shall discuss two examples in detail.
For the sake of simplicity, only linear element on
uniform meshes will be considered. In practice, the method can be applied to
arbitrary meshes and higher order elements.

{\em Example 1}. Consider the regular pattern uniform mesh as in
Figure \ref{regular}. We want to recovery the Hessian
matrix at $z_0$.  As deduced in \cite{zhang2005}, the recovered gradient
at $z_0$ is given by
\begin{equation*}
  (G_hu)(z_0)=\frac{1}{6h}\left( \begin{pmatrix}2\\ 1\end{pmatrix}u_1 +
    \begin{pmatrix}1\\ 2\end{pmatrix}u_2+
      \begin{pmatrix}-1\\ 1\end{pmatrix}u_3 +
    \begin{pmatrix}-2\\ -1\end{pmatrix}u_4 +
      \begin{pmatrix}-1\\ -2\end{pmatrix}u_5+
      \begin{pmatrix}1\\ -1\end{pmatrix}u_6\right).
\end{equation*}
Here $u_i=u(z_i), (i = 0, 1, \ldots 18)$ represents function value of $u$ at node $z_i$.
Thus, according to the definition \eqref{hessian} of  the Hessian recovery operator $H_h$,
we have
 \begin{equation}
     \begin{split}
      \begin{pmatrix}H^{xx}_hu\\H^{xy}_hu\end{pmatrix}(z_0)
	=\frac{1}{6h}\left(2(G_hu)(z_1)+(G_hu)(z_2)-(G_hu)(z_3)-\right. \\
	\left. 2(G_hu)(z_4)
	-(G_hu)(z_5)+(G_hu)(z_6)\right),
     \end{split}
	\label{hessianx}
    \end{equation}
    and
\begin{equation}
     \begin{split}
      \begin{pmatrix}H^{yx}_hu\\H^{yy}_hu\end{pmatrix}(z_0)
	=\frac{1}{6h}\left((G_hu)(z_1)+2(G_hu)(z_2)+(G_hu)(z_3)-\right. \\
	\left. (G_hu)(z_4)
	-2(G_hu)(z_5)-(G_hu)(z_6)\right),
     \end{split}
	\label{hessiany}
    \end{equation}
where
\begin{align*}
(G_hu)(z_1)=\frac{1}{6h}\left( \begin{pmatrix}2\\ 1\end{pmatrix}u_7 +
    \begin{pmatrix}1\\ 2\end{pmatrix}u_8+
      \begin{pmatrix}-1\\ 1\end{pmatrix}u_2 +
    \begin{pmatrix}-2\\ -1\end{pmatrix}u_0 +
	\begin{pmatrix}-1\\ -2\end{pmatrix}u_{18}+
      \begin{pmatrix}1\\ -1\end{pmatrix}u_6\right),
\end{align*}
and $(G_hu)(z_2), \ldots, (G_hu)(z_6)$  follow the similar pattern.
Direct calculation  reveals that
\begin{align*}
  (H_h^{xx}u)(z_0)=\frac{1}{36h^2}(&-12u_0+2u_1-4u_2-4u_3+2u_4-4u_5-4u_6
   +4u_7 + 4u_8 +u_9\\&-2u_{10}+u_{11}+4u_{12}+4u_{13}+
  4u_{14}+u_{15}-2u_{16}+u_{17}+4u_{18}),\\
  (H_h^{xy}u)(z_0)=\frac{1}{36h^2}(&6u_0-u_1+5u_2-u_3-u_4+5u_5-
  u_6-2u_7+u_8+u_9\\&+u_{10}-2u_{11}-5u_{12}-2u_{13}+u_{14}+u_{15}
  +u_{16}-2u_{17}-5u_{18}),\\
  (H_h^{yx}u)(z_0)=\frac{1}{36h^2}(&6u_0-u_1+5u_2-u_3-u_4+5u_5-
  u_6-2u_7+u_8+u_9\\&+u_{10}-2u_{11}-5u_{12}-2u_{13}+u_{14}+u_{15}
  +u_{16}-2u_{17}-5u_{18}),\\
   (H_h^{yy}u)(z_0)=\frac{1}{36h^2}(&-12u_0-4u_1-4u_2+2u_3-4u_4-4u_5+2u_6
   +u_7 - 2u_8 +u_9\\&+4u_{10}+4u_{11}+4u_{12}+u_{13}-
  2u_{14}+u_{15}+4u_{16}+4u_{17}+4u_{18}).
\end{align*}
It is observed that $(H^{xy}_hu)(z_0)=(H^{yx}_hu)(z_0)$,
which means  the recovered
Hessian matrix is symmetric, a property of the exact Hessian
we would like to maintain.

Using  Taylor expansion, we can show that
\begin{align*}
  &(H_h^{xx}u)(z_0)=u_{xx}(z_0)+\frac{h^2}{3}(u_{xxxx}(z_0)+
  u_{xxxy}(z_0)+u_{xxyy}(z_0))+O(h^4),\\
  &(H_h^{xy}u)(z_0)=u_{xy}(z_0)+\frac{h^2}{3}(u_{xxxy}(z_0)+
  u_{xxyy}(z_0)+u_{xyyy}(z_0))+O(h^4),\\
  &(H_h^{yx}u)(z_0)=u_{yx}(z_0)+\frac{h^2}{3}(u_{xxxy}(z_0)+
  u_{xxyy}(z_0)+u_{xyyy}(z_0))+O(h^4),\\
  &(H_h^{yy}u)(z_0)=u_{yy}(z_0)+\frac{h^2}{3}(u_{xxyy}(z_0)+
  u_{xyyy}(z_0)+u_{yyyy}(z_0))+O(h^4),
\end{align*}
which imply that $H_hu$ provides a second order approximation of $Hu$ at $z_0$.

{\em Example 2.} Consider the
Chevron pattern uniform mesh as shown in  Figure \ref{chevron}.
Repeating the procedure as in Example 1, we derive the recovered
Hessian matrix at $z_0$ as
\begin{align*}
    (H_h^{xx}u)(z_0)=\frac{1}{144h^2}(&-72u_{0}+36u_{13}+36u_7),\\
    (H_h^{xy}u)(z_0)=\frac{1}{144h^2}(&-12u_1+12u_3+24u_4-24u_6+6u_7+
    \\&+36u_9-36u_{11}-6u_{13}+6u_{14}-6u_{18}),\\
    (H_h^{yx}u)(z_0)=\frac{1}{144h^2}(&12u_1-12u_3+36u_4-36u_6-6u_7+
    \\&6u_8+24u_9-24u_{11}-6u_{12}+6u_{13}),\\
   (H_h^{yy}u)(z_0)=\frac{1}{144h^2}(&-48u_0-10u_1-22u_2-10u_3-10u_4+18u_5-
   \\&10u_6-2u_7+u_8+10u_9+36u_{10}+10u_{11}+u_{12}-\\
   &2u_{13}+u_{14}+10u_{15}+16u_{16}
   +10u_{17}+u_{18}).
\end{align*}
In addition, we have the following Taylor expansion
\begin{align*}
    &(H_h^{xx}u)(z_0)=u_{xx}(z_0)+\frac{h^2}{3}u_{xxxx}(z_0)+
    \frac{2h^4}{45}u_{xxxxxx}(z_0)+O(h^5),\\
  &(H_h^{xy}u)(z_0)=u_{xy}(z_0)+\frac{h^2}{12}(3u_{xxxy}(z_0)+
  2u_{xyyy}(z_0))-\frac{h^3}{24}u_{xxxyy}(z_0)+O(h^4),\\
  &(H_h^{yx}u)(z_0)=u_{yx}(z_0)+\frac{h^2}{12}(3u_{xxxy}(z_0)+
  2u_{xyyy}(z_0))+\frac{h^3}{24}u_{xxxyy}(z_0)+O(h^4),\\
  &(H_h^{yy}u)(z_0)=u_{yy}(z_0)+\frac{h^2}{6}(u_{xxyy}(z_0)+
  2u_{yyyy}(z_0))-\frac{5h^3}{72}u_{xxyyy}(z_0)+O(h^4).
\end{align*}
We conclude that $H_hu$ is a second order approximation to the
Hessian matrix. It is worth pointing out that, though $H_h^{xy} \neq H_h^{yx}$
for the Chevron pattern uniform mesh, they are both second order finite
difference schemes at $z_0$.
\begin{remark}
    PPR-PPR is the only one among the four Hessian recovery
    methods mentioned in Remark
    \ref{fourmethods} that provides second order approximation
    for all five  mesh patterns, especially the Chevron pattern.
\end{remark}

\begin{figure}[!h]
  \centering
  \begin{minipage}[c]{0.5\textwidth}
  \centering
  \includegraphics[width=0.9\textwidth]{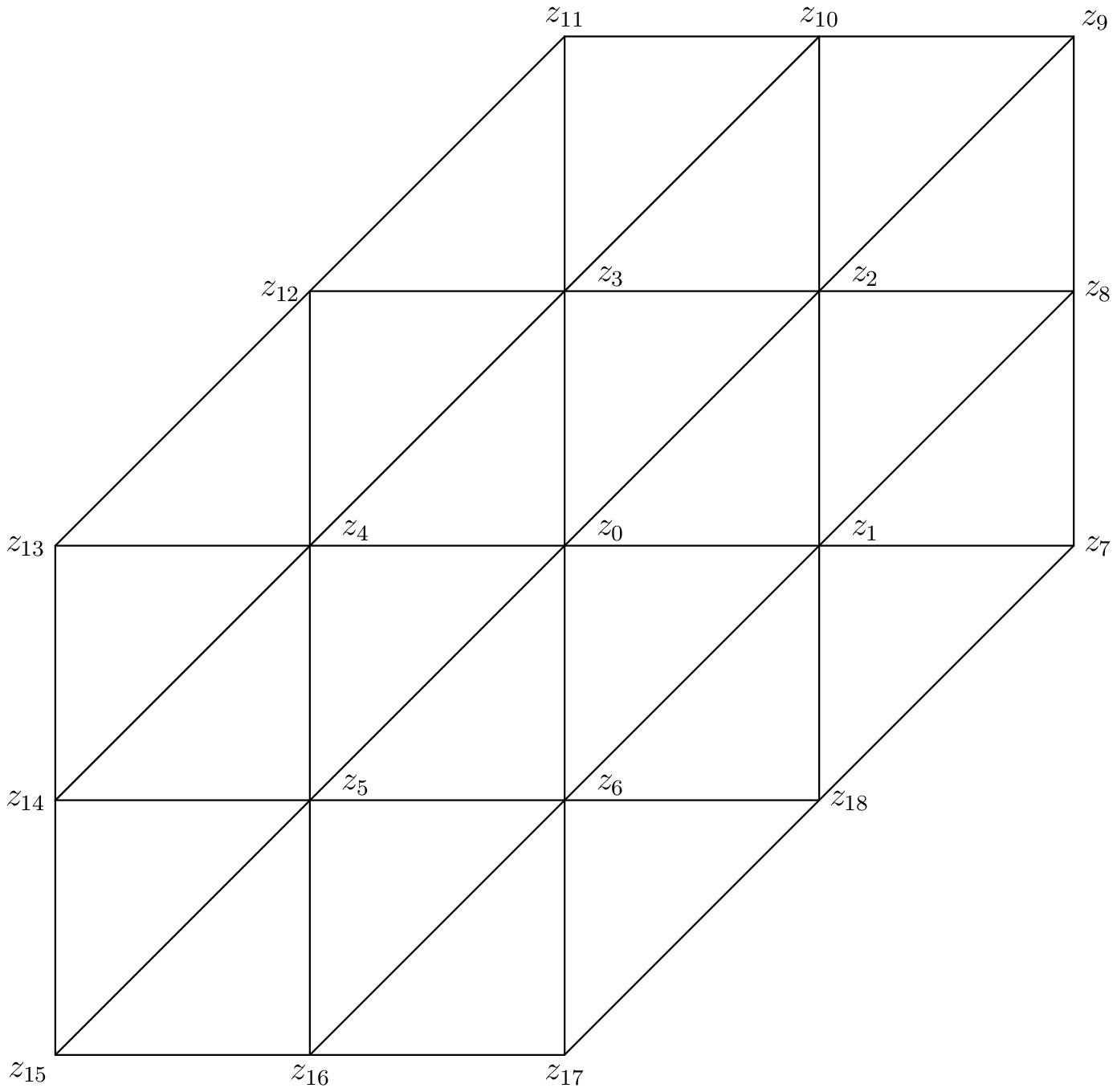}
  \caption{Regular Pattern}
\label{regular}
\end{minipage}%
 \begin{minipage}[c]{0.5\textwidth}
  \centering
  \includegraphics[width=0.9\textwidth]{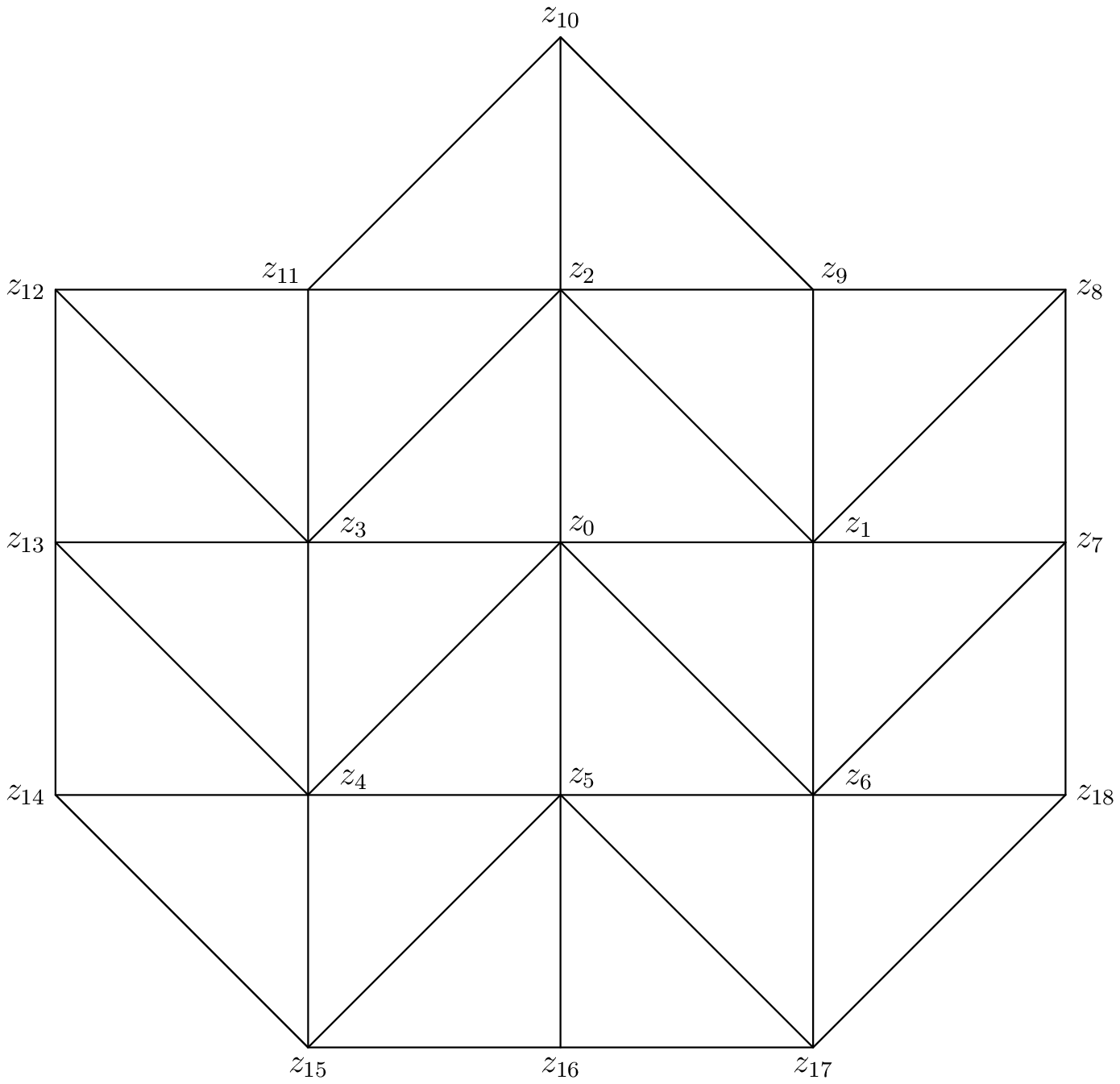}
   \caption{Chevron Pattern}
\label{chevron}
\end{minipage}
\end{figure}

Both example 1 and 2   indicate that
for linear element the PPR-PPR approach is equivalent to a finite difference
scheme of second order accuracy at vertex $z_0$.
In general, we can show that
$H_h$ preserves  polynomials of degree up to $k+1$ for $k$th order element.

Consider $P_k$-element. Let $u$ be a polynomial of degree $k+1$.
Since $G_h$ preserves polynomials of degree $k+1$,
it follows that $G_h u = \nabla u$ which is a polynomial
of degree $k$. Therefore, we have
\begin{equation}
  H_hu = (G_h(G^{x}_hu), G_h(G^{y}_hu))
=(G_h\frac{\partial u}{\partial x}, G_h\frac{\partial u}{\partial x})
=(\nabla\frac{\partial u}{\partial x}, \nabla\frac{\partial u}{\partial x})
= Hu.
  \label{equ:general}
\end{equation}
It means that $H_h$ preserves polynomials of degree $k+1$ for arbitrary mesh.

Now we proceed  translation invariant mesh.
Under the polynomial preserving property,
the recovered gradient is exact for polynomials of degree $k+1$. Therefore
\begin{equation}
G_h^x u = D_x u + h^{k+1} \pmb a^x \cdot D^{k+2}u + h^{k+2} \pmb b^x
\cdot D^{k+3}u + h^{k+3} \pmb c^x \cdot D^{k+4}u + \cdots;
\label{gxk+1}
\end{equation}
\begin{equation}
G_h^y u = D_y u + h^{k+1} \pmb a^y \cdot D^{k+2}u + h^{k+2} \pmb b^y
\cdot D^{k+3}u + h^{k+3} \pmb c^y \cdot D^{k+4}u + \cdots.
\label{gyk+1}
\end{equation}
Note that $\pmb a^x, \pmb a^y, \pmb b^x, \pmb b^y, \pmb c^x, \pmb c^y, \cdots$
are functions of $(x,y)$ if $z = (x, y)$ a nodal point of arbitrary mesh.

Let $\pmb z=(x, y)$ be any node on a translation invariant mesh.  We further assume that
$\pmb z$ is a local symmetry center for all sampling points involved.
Notice that coefficients $\pmb a^x$, $\pmb a^y$, $\pmb b^x$, $\pmb b^y, \ldots$
depend only on the coordinates of nodes, since we recover gradient at
nodes only.  Thus for translation invariant meshes,
$\pmb a^x$, $\pmb a^y$, $\pmb b^x$, $\pmb b^y, \ldots$  are constants.
 In addition, due to symmetry, it makes no difference if we perform $G^x_h$ or
$G^y_h$ first.
Hence,
\begin{equation}
    \begin{split}
	&(H_h^{xy}u)(\pmb z)  = (G_h^y(G_h^x u))(\pmb z) \\
	=& G_h^y [ D_x u(\pmb z) + h^{k+1} \pmb a^x \cdot D^{k+2}u(\pmb z)
	    + h^{k+2} \pmb b^x \cdot D^{k+3}u(\pmb z) + \cdots ] \\
        =& (G_h^y(D_x u))(\pmb z) + h^{k+1} (\pmb a^x \cdot
	G_h^y(D^{k+2}u) )(\pmb z) + h^{k+2}
	(\pmb b^x \cdot G_h^y(D^{k+3}u) )(\pmb z) + \cdots  \\
        =& (D_y D_xu)(\pmb z) + h^{k+1} (\pmb a^y \cdot D^{k+2} D_xu)(\pmb z)
	+ h^{k+2} (\pmb b^y \cdot D^{k+3}D_xu )(\pmb z)\\
        & + h^{k+1} (\pmb a^x \cdot D_y(D^{k+2}u) )(\pmb z) + h^{k+2}(\pmb b^x
	\cdot D_y(D^{k+3}u) )(\pmb z)+ O(h^{k+3})  \\
        =& (D_y D_xu)(\pmb z) + h^{k+1} [\pmb a^y \cdot D^{k+2} D_xu + \pmb a^x
	    \cdot D_y(D^{k+2}u)](\pmb z)+\\
          &  h^{k+2} [ \pmb b^y \cdot D^{k+3}D_xu + \pmb b^x
	      \cdot D_y(D^{k+3}u) ](\pmb z) + O(h^{k+3}).
    \end{split}
    \label{tgxyk+1}
\end{equation}
Notice that \eqref{tgxyk+1} is valid only at
nodal points. Similarly,

\begin{align}
    \begin{split}
	(H_h^{yx}u)(\pmb z) =& (D_x D_yu)(\pmb z) +
	h^{k+1} [\pmb a^x \cdot D^{k+2} D_yu +
	    \pmb a^y \cdot D_x(D^{k+2}u)](\pmb z)+ \\
            & h^{k+2} [ \pmb b^x \cdot D^{k+3}D_yu +\pmb b^y
		\cdot  D_x(D^{k+3}u) ](\pmb z) + O(h^{k+3});
    \end{split}
    \label{tgyxk+1}\\
    \begin{split}
	(H_h^{xx}u)(\pmb z) =& (D_x D_xu)(\pmb z) + h^{k+1}
	[\pmb a^x \cdot D^{k+2} D_xu
	    + \pmb a^x \cdot D_x(D^{k+2}u)](\pmb z)+ \\
          &h^{k+2} [ \pmb b^x \cdot D^{k+3}D_xu + \pmb b^x
	      \cdot D_x(D^{k+3}u) ](\pmb z) + O(h^{k+3});
    \end{split}
    \label{tgxxk+1}\\
    \begin{split}
        (H_h^{yy}u)(\pmb z) =& (D_y D_yu)(z) +
	h^{k+1} [\pmb a^y \cdot D^{k+2} D_yu +
	    \pmb a^y \cdot D_y(D^{k+2}u)](\pmb z)+ \\
        &h^{k+2} [ \pmb b^y \cdot D^{k+3}D_yu + \pmb b^y \cdot
	D_y(D^{k+3}u)](\pmb z)	+ O(h^{k+3}).
    \end{split}
    \label{tgyyk+1}
\end{align}
\eqref{tgxyk+1}--\eqref{tgyyk+1}  imply that the Hessian recovery operator
$H_h$ is exact for polynomials of degree  $k+2$ for translation
invariant meshes. Also, we observe  $H_h^{xy}=H_h^{yx}$  from \eqref{tgxyk+1}
and \eqref{tgyxk+1}.

It is worth pointing out that, except for the Chevron pattern,
\eqref{tgxyk+1}--\eqref{tgyyk+1} are valid for the other four patterns
of uniform meshes, since the recovered gradient $G_hu$ produces the same stencil
at each node.

Next we consider even order ($k=2r)$ element on translation invariant
meshes, in which case
\begin{equation}
\pmb a^x(\pmb z) = \pmb 0, \quad \pmb c^x(\pmb z) = \pmb 0,
\quad \pmb a^y(\pmb z) = \pmb 0, \quad \pmb c^y(\pmb z) = \pmb 0;
\label{ff}
\end{equation}
\begin{equation}
D \pmb a^x(\pmb z) = \pmb 0, \quad D \pmb c^x(\pmb z) = \pmb 0,
\quad D \pmb a^y(\pmb z) = \pmb 0, \quad D \pmb c^y(\pmb z) = \pmb 0.
\label{dd}
\end{equation}
and $\pmb b^x , \pmb b^y, \cdots $ are constants in \eqref{gyk+1}.
 Here the symbol $D$ is understood as taking all partial derivatives
 to each entry of the vector. Consequently,
\begin{equation}\label{gyk+2}
(G_h^y u)(\pmb z) = (D_y u)(\pmb z) + h^{k+2}
(\pmb b^y \cdot D^{k+3}u)(\pmb z) + O(h^{k+4}),
\end{equation}
Also, \eqref{gyk+2} is valid only at nodal points.
Plugging \eqref{gxk+1} into \eqref{gyk+2} yields
\begin{equation*}
    \begin{split}
	&(H_h^{xy} u)(\pmb z) = (G_h^y G_h^x u)(\pmb z) \\
      = & (D_y G_h^x u)(\pmb z) + h^{k+2} (\pmb b^y \cdot D^{k+3}
      G_h^x u)(\pmb z) + O(h^{k+4}) \\
      = & D_y ( D_x u + h^{k+1} \pmb a^x \cdot D^{k+2}u + h^{k+2}
      \pmb b^x \cdot D^{k+3}u + h^{k+3} \pmb c^x \cdot D^{k+4}u \\
      &+\cdots )(\pmb z) + h^{k+2} (\pmb b^y \cdot D^{k+3} D_x u)
      (\pmb z) + O(h^{k+4}) \\
     = & (D_y D_x u)(\pmb z) + h^{k+2} (\pmb b^x \cdot  D_yD^{k+3}u +
     \pmb b^y \cdot D^{k+3} D_x u )(\pmb z) + O(h^{k+4}).
    \end{split}
\end{equation*}
In the last identity we have used (\ref{ff}) and (\ref{dd}).

The argument for the other three entries of recovered Hessian matrix are
similar. We conclude that the Hessian recovery operator $H_h$ is exact for
polynomials of degree up to $k+3$ when $k$ is even and the mesh is
translation invariant  and symmetric with respect to $x$ and $y$.

The above results can be summarized as the following theorem:
\begin{theorem}
    The Hessian recovery operator $H_h$ preserves polynomials of degree $k+1$
    for an arbitrary mesh. If $z$ is a node of a translation invariant mesh,
    then $H_h$ preserves polynomials of degree $k+2$  for odd $k$,
    and of degree $k+3$ for even $k$. Moreover, if the sampling points are symmetric with
    respect to $x$ and $y$, then $H_h$ is symmetric.
    \label{poly}
\end{theorem}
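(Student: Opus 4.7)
The plan is to split the claim into three parts and handle each in the order: arbitrary mesh, translation-invariant mesh with odd $k$, translation-invariant mesh with even $k$, and then read off symmetry from the formulas produced along the way. The unifying idea is that, by the definition \eqref{hessian}, $H_h$ is simply $G_h$ composed with itself componentwise, so everything can be reduced to statements about the single-application gradient recovery $G_h$, which already enjoys a polynomial-preserving property of degree $k+1$ \cite{zhang2005,naga2005}.

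For the first part, let $u\in\mathbb{P}_{k+1}$. Since $G_h$ is polynomial-preserving of degree $k+1$, one has $G_h u=\nabla u$, whose components $\partial_x u$ and $\partial_y u$ lie in $\mathbb{P}_k\subset\mathbb{P}_{k+1}$. Applying $G_h$ once more, again by polynomial preservation, yields $G_h(\partial_x u)=\nabla \partial_x u$ and $G_h(\partial_y u)=\nabla \partial_y u$; assembling the four entries via \eqref{hessian} gives $H_h u=Hu$. This is exactly the calculation recorded in \eqref{equ:general}; no mesh hypothesis is used.

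For the translation-invariant case with odd $k$, I would start from the local Taylor-type expansions \eqref{gxk+1}--\eqref{gyk+1} for $G_h^x u$ and $G_h^y u$ at a node $\pmb z$, where the coefficient vectors $\pmb a^x,\pmb a^y,\pmb b^x,\pmb b^y,\ldots$ a priori depend on the coordinates of $\pmb z$. The key step is to argue that on a translation-invariant mesh these coefficients are actually constant in $\pmb z$ (over the orbit of translations), because the sampling stencil used by PPR is the same at every node of the orbit, so the same least-squares functional yields the same linear combination of nodal values. With the coefficients constant, substituting \eqref{gxk+1} into the outer $G_h^y$, applying $G_h^y$ term by term, and using polynomial preservation of the inner $G_h^y$ on $D^{k+2}u$, $D^{k+3}u$, etc., one obtains the expansion \eqref{tgxyk+1}; the analogous derivations give \eqref{tgyxk+1}--\eqref{tgyyk+1}. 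These formulas make the leading error of $H_h u$ a linear combination of derivatives of order $\geq k+3$, which is the polynomial preservation of degree $k+2$. I expect the main obstacle to be justifying the coefficient constancy cleanly; the right way is to formalize it through the stencil invariance under the translation maps $T^{\ell}_{\nu\tau}$ defined in \eqref{trans}, observing that at a node $\pmb z$ the PPR least-squares problem only sees a sampling pattern that is translation-congruent to that at any other node in the same orbit.

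For the even case $k=2r$, the extra mileage comes from the symmetry \eqref{ff}--\eqref{dd}: on a mesh that is symmetric with respect to $x$ and $y$ and translation invariant, the odd-order error coefficients $\pmb a^x,\pmb a^y,\pmb c^x,\pmb c^y$ vanish identically, and their derivatives vanish as well, so the expansion \eqref{gxk+1} collapses to \eqref{gyk+2} (and its $x$-analogue) with leading error at order $h^{k+2}$. Substituting this sharper expansion into the outer $G_h$ kills the $h^{k+1}$ and $h^{k+3}$ terms in the composition, and one verifies directly, as in the block of displayed equations following \eqref{gyk+2}, that the residual reduces to $h^{k+2}[\pmb b^x\cdot D_y D^{k+3}u+\pmb b^y\cdot D^{k+3}D_x u](\pmb z)+O(h^{k+4})$. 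This residual is a linear functional of derivatives of order at least $k+3$, which yields exactness on $\mathbb{P}_{k+3}$. The argument for the entries $(xx)$, $(yx)$, $(yy)$ is identical modulo relabeling. Finally, the symmetry $H_h^{xy}=H_h^{yx}$ is read off directly from comparing \eqref{tgxyk+1} with \eqref{tgyxk+1}: when the sampling set is symmetric under swapping $x$ and $y$, one has $\pmb a^x\leftrightarrow\pmb a^y$ and $\pmb b^x\leftrightarrow\pmb b^y$ under the same swap, so the two expansions become term-for-term equal.
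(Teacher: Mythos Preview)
Your proposal is correct and follows essentially the same route as the paper: the arbitrary-mesh case is exactly the calculation \eqref{equ:general}, the translation-invariant case is handled by arguing that the expansion coefficients in \eqref{gxk+1}--\eqref{gyk+1} are constant and then composing to obtain \eqref{tgxyk+1}--\eqref{tgyyk+1}, and the even-$k$ improvement comes from the cancellations \eqref{ff}--\eqref{dd}. One small clarification on the symmetry step: once the coefficients $\pmb a^x,\pmb a^y,\pmb b^x,\pmb b^y$ are constants, the right-hand sides of \eqref{tgxyk+1} and \eqref{tgyxk+1} already agree term by term simply because partial derivatives commute (e.g.\ $D^{k+2}D_xu=D_xD^{k+2}u$), so the $x\leftrightarrow y$ swap argument you give is not actually needed there; the role of the sampling-point symmetry in the paper is rather to ensure that the order of applying $G_h^x$ and $G_h^y$ is immaterial at the operator level, which is what makes the termwise expansion legitimate.
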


%%%%%%%%%%%%%%%%%%%%%%%%%%%%%%%%%%%%%%%%%%%%%%%%%%%%%%%%%%%%%%%%%%%%
\begin{remark}
  According to \cite{vallet2007}, the best  Hessian recovery
  method in the literature
     preserves polynomial of degree 2 for linear element. Our method
    preserves polynomial of degree 2  on general
    unstructured meshes and preserves polynomials of degree 3  on
    translation invariant meshes for linear element.
\end{remark}

\begin{theorem}
    \label{pp}
    Let $u\in W_{\infty}^{k+2}(\mathcal{K}_z)$; then
    \begin{equation*}
      \|H u-H_hu\|_{0, \infty, \mathcal{K}_z}\lesssim h^{k}|u|
	_{k+2, \infty,\mathcal{K}_z}.
    \end{equation*}
    If $z$ is a node of translation invariant mesh and $u\in W_{\infty}^{k+3}(\mathcal{K}_z)$, then
    \begin{equation*}
	|(H u-H_hu)(z)|\lesssim h^{k+1}|u|
	_{k+3, \infty,\mathcal{K}_z}.
    \end{equation*}
Furthermore, if $z$ is a node of translation invariant mesh and
$u\in W_{\infty}^{k+4}(\mathcal{K}_z)$  with $k$ an even number, then
    \begin{equation*}
	|(H u-H_hu)(z)|\lesssim h^{k+2}|u|
	_{k+4, \infty,\mathcal{K}_z}.
    \end{equation*}
\end{theorem}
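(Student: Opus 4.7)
The plan is to combine the polynomial preservation established in Theorem \ref{poly} with a standard Bramble--Hilbert argument, using a local $L^\infty$ stability of $H_h$ to absorb the remainder. All three estimates share the same template, with only the preservation degree supplied by Theorem \ref{poly} changing between them.

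The first ingredient is a local stability bound
\begin{equation*}
|H_h v(z)| \lesssim h^{-2}\, \max_{\tilde z \in \mathcal{K}_z}|v(\tilde z)|,
\end{equation*}
valid for any continuous $v$ (since the construction of $G_h$ only uses nodal values). Because $H_h=G_h\circ G_h$ componentwise and each $G_h v(\hat z)=\nabla p_{\hat z}(\hat z)$ is the gradient of a least squares polynomial fit to nodal values on a patch around $\hat z$, a scaling argument on the reference patch $\omega_{\hat z}$ (see the remark after \eqref{least}) shows the coefficients of $p_{\hat z}$ are bounded by $\max_{\tilde z}|v(\tilde z)|$, and evaluating a derivative at the node contributes a factor $h^{-1}$ after unscaling. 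Applying this twice gives the $h^{-2}$ stability of $H_h$, with the patch understood as the enlarged set of nodes touched in the two successive recoveries.

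The second ingredient is a polynomial approximation. Given the case at hand, set $m=k+1$ for a general mesh, $m=k+2$ at a node of a translation invariant mesh, and $m=k+3$ if additionally $k$ is even. Choose $p\in\mathbb{P}_m(\mathcal{K}_z)$ to be, e.g., the averaged Taylor polynomial of $u$ of degree $m$ centered at $z$, so that the standard Bramble--Hilbert estimates hold:
\begin{equation*}
\|u-p\|_{0,\infty,\mathcal{K}_z}\lesssim h^{m+1}|u|_{m+1,\infty,\mathcal{K}_z},\qquad |u-p|_{2,\infty,\mathcal{K}_z}\lesssim h^{m-1}|u|_{m+1,\infty,\mathcal{K}_z}.
\end{equation*}
By Theorem \ref{poly}, $H_h p=Hp$ (either on the whole patch or pointwise at $z$, as appropriate), so $Hu-H_hu=H(u-p)-H_h(u-p)$, and the triangle inequality together with the stability bound yields
\begin{equation*}
|(Hu-H_hu)(z)|\lesssim |u-p|_{2,\infty,\mathcal{K}_z}+h^{-2}\|u-p\|_{0,\infty,\mathcal{K}_z}\lesssim h^{m-1}|u|_{m+1,\infty,\mathcal{K}_z}.
\end{equation*}
Substituting $m=k+1, k+2, k+3$ produces the three claimed rates $h^k$, $h^{k+1}$, $h^{k+2}$. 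For the first (global) bound measured in the $\mathcal{K}_z$-wide $L^\infty$ norm, the same argument is applied at every node of the patch, and the nodal bounds are transferred to the whole of $\mathcal{K}_z$ through the Lagrange interpolation used to extend $H_h$ off the nodes, at the cost of only a constant.

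The main obstacle is the stability estimate for $H_h$ with constants that are independent of $h$. One must verify that the rank and geometric conditions from the remark after \eqref{least} hold uniformly over all patches encountered in both recovery passes, and that the second application of $G_h$ does not degrade the scaling. Once this is in hand the argument is essentially automatic: polynomial preservation from Theorem \ref{poly} kills the leading Taylor terms, and Bramble--Hilbert provides the optimal order in the remainder, in the standard manner of recovery-type superconvergence proofs.
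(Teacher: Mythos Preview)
Your proposal is correct and follows essentially the same approach as the paper: the paper's own proof is the single sentence ``It is a direct result of Theorem~\ref{poly} and application of the Hilbert--Bramble Lemma,'' and your argument is precisely a fleshed-out version of that, supplying the local $h^{-2}$ stability of $H_h$ and the Taylor/Bramble--Hilbert remainder estimate that make the one-line proof go through.
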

\begin{proof}
    It is a direct result of Theorem \ref{poly} and
    application of the Hilbert-Bramble Lemma.
\end{proof}

\section{Superconvergence analysis}
In this section, we first use the supercloseness between the gradient of the
finite element solution $u_h$  and the gradient of the interpolation $u_I$
\cite{bank2003, cao2013, huang2013, huang2008, wu2007, xu2004},
and properties of the PPR operator
\cite{zhang2005, naga2004} to establish the superconvergence property
of our Hessian recovery operator  on mildly structured mesh.
 Then we utilize the tool of superconvergence by difference
quotients from \cite{wahlbin1995} to prove the proposed Hessian recovery
method is ultraconvergent
for translation invariant finite element space of any order.

In this section, we consider the following variational problem:
find $u\in H^1(\Omega)$ such that
\begin{equation}
    B(u, v)=\int_{\Omega}(\mathcal{D}\nabla u+\mathbf{b}u)\cdot \nabla v+cuvdx
    =(f, v), \quad \forall v\in H^1(\Omega).
    \label{variational}
  \end{equation}
Here $\mathcal{D}$ is  a $2\times 2$ symmetric positive definite matrix,
$\mathbf{b}$ is a vector, and $c$ as well as $f$ are scalars.
 All coefficient functions are
assumed to be smooth.

In order to insure  \eqref{variational} has a unique solution, we assume
the bilinear form $B(\cdot, \cdot)$ satisfies the continuity condition
\begin{equation}
  |B(u, v)|\le \nu \|u\|_{1, \Omega}\|v\|_{1, \Omega},
  \label{equ:continous}
\end{equation}
for all $u, v \in H^{1}(\Omega)$. We also assume the inf-sup conditions
\cite{ciarlet1978, brenner2008, bank2003}
\begin{equation}
  \inf_{u\in H^1(\Omega)}\sup_{v\in H^1(\Omega)}
    \frac{B(u, v)}{\|u\|_{1, \Omega}\|v\|_{1, \Omega}}
    =
  \sup_{u\in H^1(\Omega)}\inf_{v\in H^1(\Omega)}
    \frac{B(u, v)}{\|u\|_{1, \Omega}\|v\|_{1, \Omega}}
    \ge  \mu > 0.
  \label{equ:infsup}
\end{equation}

The  finite element approximation of \eqref{variational} is to
find $u_h\in S_h$ satisfying
\begin{equation}
    B(u_h, v_h) = (f, v_h),  \quad \forall v_h \in S_h.
    \label{discrete}
\end{equation}
To insure a unique solution for \eqref{discrete}, we assume the inf-sup conditions
\begin{equation}
  \inf_{u\in S_h}\sup_{v\in S_h}
    \frac{B(u, v)}{\|u\|_{1, \Omega}\|v\|_{1, \Omega}}
    =
  \sup_{u\in S_h}\inf_{v\in S_h}
    \frac{B(u, v)}{\|u\|_{1, \Omega}\|v\|_{1, \Omega}}
    \ge  \mu > 0.
  \label{equ:disinfsup}
\end{equation}

From \eqref{variational} and \eqref{discrete}, it is easy to see that
\begin{equation}
  B(u-u_h, v) = 0
  \label{equ:orth}
\end{equation}
for any $v \in S_h$. In particular, \eqref{equ:orth} holds for any
$v \in S^{\text{comp}}_h(\Omega)$.

\subsection{Linear element}

Linear finite element space $S_h$ on quasi-uniform mesh $\mathcal{T}_h$
is considered in this subsection.

\begin{definition}
The triangulation $\mathcal{T}_h$ is said to satisfy Condition
$(\sigma,\alpha)$ if
there exist a partition $\mathcal{T}_{h,1} \cup \mathcal{T}_{h,2}$ of $\mathcal{T}_h$
and positive constants $\alpha$ and $\sigma$ such that every
two adjacent triangles in $\mathcal{T}_{h,1}$ form an $O(h^{1+\alpha})$ parallelogram and
$$
\sum_{T\in {\mathcal{T}_{h,2}}} |T| = O(h^\sigma).
$$
\end{definition}
An $O(h^{1+\alpha})$ parallelogram is a quadrilateral shifted from a parallelogram by $O(h^{1+\alpha})$.

For general $\alpha$ and $\sigma$, Xu and Zhang \cite{xu2004} proved the following theorem.

\begin{theorem}\label{linearsuperclose}
 Let $u$ be the solution of \eqref{variational}, let $u_h\in S_h$ be the finite element
solution of \eqref{discrete}, and let $u_I \in S_h$ be the linear interpolation of $u$. If the triangulation
$\mathcal{T}_h$ satisfies Condition $(\sigma,\alpha)$ and $u \in H^3(\Omega) \cap W^{2}_{\infty} (\Omega)$, then
$$
|u_h-u_I|_{1,\Omega} \lesssim  h^{1+\rho} (|u|_{3,\Omega} + |u|_{2,\infty,\Omega}),
$$
where $\rho = \min(\alpha,\sigma/2,1/2)$.
\end{theorem}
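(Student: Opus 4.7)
The plan is to reduce the estimate to Galerkin orthogonality followed by a structured analysis of the linear interpolation error tested against $S_h$. Setting $e_h := u_h - u_I \in S_h$, the orthogonality relation \eqref{equ:orth} gives $B(e_h, v) = B(u - u_I, v)$ for every $v \in S_h$, and the discrete inf-sup condition \eqref{equ:disinfsup} immediately yields
\[
|e_h|_{1,\Omega} \lesssim \sup_{v \in S_h}\frac{|B(u - u_I, v)|}{\|v\|_{1,\Omega}}.
\]
It therefore suffices to show $|B(u - u_I, v)| \lesssim h^{1+\rho}(|u|_{3,\Omega} + |u|_{2,\infty,\Omega})\|v\|_{1,\Omega}$ uniformly in $v \in S_h$.

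The convection and reaction contributions $(\mathbf{b}(u - u_I), \nabla v) + (c(u - u_I), v)$ are harmless: the standard $L^2$ interpolation estimate $\|u - u_I\|_{0,\Omega} \lesssim h^2 |u|_{2,\Omega}$ already gives an $O(h^2)$ bound, which is stronger than needed. All the difficulty lies in the principal part
\[
I := \int_\Omega \mathcal{D}\nabla(u - u_I)\cdot \nabla v\, dx.
\]
Using Condition $(\sigma,\alpha)$ I split $\mathcal{T}_h = \mathcal{T}_{h,1}\cup \mathcal{T}_{h,2}$ and further group the triangles of $\mathcal{T}_{h,1}$ into the adjacent pairs that form approximate parallelograms. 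On a perfect parallelogram $Q$ the two triangles are related by a $180^\circ$ rotation about the centroid; exploiting this centro-symmetry together with the fact that $\nabla v$ is piecewise constant while $\nabla(u - u_I)$ is odd up to a cubic remainder produces the cancellation estimate
\[
\Bigl|\int_Q \mathcal{D}\nabla(u - u_I)\cdot \nabla v\, dx\Bigr| \lesssim h^2 |u|_{3,Q}|v|_{1,Q}.
\]
A perturbation argument shows that when the quadrilateral is only an $O(h^{1+\alpha})$-parallelogram, an extra error of order $h^{1+\alpha}|u|_{2,\infty,Q}|v|_{1,Q}|Q|^{1/2}$ must be added. Summing over pairs and applying Cauchy--Schwarz, the $\mathcal{T}_{h,1}$ contribution is $O(h^{1+\alpha})(|u|_{3,\Omega} + |u|_{2,\infty,\Omega})\|v\|_{1,\Omega}$. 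On $\mathcal{T}_{h,2}$ I use the pointwise bound $|u - u_I|_{1,\infty,T}\lesssim h|u|_{2,\infty,T}$ together with $\sum_{T \in \mathcal{T}_{h,2}}|T| = O(h^\sigma)$ and Cauchy--Schwarz to obtain a contribution of order $h\cdot h^{\sigma/2}|u|_{2,\infty,\Omega}\|v\|_{1,\Omega}$. The threshold $1/2$ in $\rho$ reflects the fact that this measure-plus-Cauchy--Schwarz argument cannot produce better than $h^{3/2}$ without additional mesh regularity.

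The main obstacle is the parallelogram cancellation lemma and, more importantly, its stability under the $O(h^{1+\alpha})$ geometric perturbation that defines $\mathcal{T}_{h,1}$. On an exact parallelogram the identity is a well-known computation that exploits centro-symmetry and the Taylor expansion of $u$ to second order about the centroid; the delicate work is to verify that this identity survives with the right order of error once each quadrilateral is shifted by $O(h^{1+\alpha})$, so that the ``tail'' of the perturbation remains controlled by $|u|_{3}$ and $|u|_{2,\infty}$ on a local patch. Once that technical lemma is in place, the rest of the argument is routine bookkeeping: combine the bounds from $\mathcal{T}_{h,1}$, $\mathcal{T}_{h,2}$, and the lower-order terms, divide through by $\|v\|_{1,\Omega}$, and set $\rho = \min(\alpha, \sigma/2, 1/2)$ to obtain the claimed supercloseness.
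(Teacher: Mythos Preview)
The paper does not prove this theorem at all: it is quoted verbatim from Xu and Zhang \cite{xu2004} and simply cited (``Xu and Zhang proved the following theorem''). So there is no ``paper's own proof'' to compare against.

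That said, your outline is essentially the argument of \cite{xu2004} (and of Bank--Xu \cite{bank2003} on which it builds): reduce via Galerkin orthogonality and the discrete inf--sup condition to a bound on $B(u-u_I,v)$, dispose of the zero-order and convection pieces with the $O(h^2)$ $L^2$ interpolation estimate, and then split the diffusion term over $\mathcal{T}_{h,1}\cup\mathcal{T}_{h,2}$, using the centro-symmetric cancellation on approximate parallelograms for the first piece and the crude $W^{1,\infty}$ bound plus the measure estimate for the second. One small inaccuracy: your explanation of the cap $\rho\le 1/2$ is not quite right. The $\mathcal{T}_{h,2}$ argument gives $h^{1+\sigma/2}$ with no intrinsic ceiling; the $1/2$ in \cite{xu2004} arises instead from the variable-coefficient analysis of the principal part on $\mathcal{T}_{h,1}$ (freezing $\mathcal{D}$ at element centroids introduces an $O(h)$ commutator that, after the parallelogram cancellation, leaves an $O(h^{3/2})$ residual). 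If $\mathcal{D}$ were constant the cap would disappear. Apart from this attribution issue, your sketch matches the cited proof.
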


Using the above  result, we are able to obtain a convergence rate for our Hessian recovery operator.
\begin{theorem}
    Suppose that the solution of \eqref{variational} belongs to  $H^3(\Omega) \cap W^{2}_{\infty} (\Omega)$
     and $\mathcal{T}_h$ satisfies Condition $(\sigma,\alpha)$,
     then we have
    \begin{equation*}
	\|H u - H_h u_h\|_{0,\Omega}\le  h^\rho \|u\|_{3,\infty, \Omega}.
    \end{equation*}
    \label{linearsuperconvergent}
\end{theorem}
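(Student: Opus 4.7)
The plan is to insert the nodal interpolant $u_I\in S_h$ between the exact and recovered Hessian, writing
$$Hu-H_hu_h=(Hu-H_hu_I)+H_h(u_I-u_h),$$
and estimate each piece independently.

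For the consistency piece $Hu-H_hu_I$, I would apply Theorem \ref{pp} with $k=1$ on each nodal patch $\mathcal{K}_z$, which yields the patchwise $L^\infty$ bound
$$\|Hu-H_hu_I\|_{0,\infty,\mathcal{K}_z}\lesssim h\,|u|_{3,\infty,\mathcal{K}_z}.$$
Squaring, multiplying by $|K|$ on each triangle $K\subset\mathcal{K}_z$, summing over the triangulation, and using the finite-overlap property of the patches converts this to the global bound $\|Hu-H_hu_I\|_{0,\Omega}\lesssim h\,\|u\|_{3,\infty,\Omega}$.

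For the discrete piece $H_h(u_I-u_h)$ I would first establish a two-level stability estimate
$$\|H_hv\|_{0,\Omega}\lesssim h^{-1}\,|v|_{1,\Omega},\qquad\forall\,v\in S_h.$$
Since the PPR operator $G_h$ is $L^2$-stable on $S_h$, i.e. $\|G_hw\|_{0,\Omega}\lesssim |w|_{1,\Omega}$ (a standard property from \cite{zhang2005,naga2004}), applying $G_h$ once gives $\|G_h^xv\|_{0,\Omega}\lesssim|v|_{1,\Omega}$; the inverse inequality on $S_h$ upgrades this to $|G_h^xv|_{1,\Omega}\lesssim h^{-1}|v|_{1,\Omega}$, after which another application of $G_h$-stability delivers the claimed two-level bound. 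Applying it with $v=u_I-u_h$ and invoking the supercloseness Theorem \ref{linearsuperclose} yields
$$\|H_h(u_I-u_h)\|_{0,\Omega}\lesssim h^{-1}\cdot h^{1+\rho}\bigl(|u|_{3,\Omega}+|u|_{2,\infty,\Omega}\bigr)\lesssim h^{\rho}\|u\|_{3,\infty,\Omega}.$$
Adding the two contributions and absorbing the $h$ into $h^{\rho}$ (using $\rho\le 1/2\le 1$) completes the proof.

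The main obstacle is the second piece: the double application of the recovery operator costs a factor of $h^{-1}$ via the inverse estimate, so the ordinary $O(h)$ approximation of $u_h$ to $u_I$ would be insufficient; the superclose bound $h^{1+\rho}$ of Xu and Zhang is precisely what is needed to absorb that inverse factor and leave an $O(h^{\rho})$ remainder. A secondary subtlety is justifying the $L^2$-stability $\|G_hw\|_{0,\Omega}\lesssim|w|_{1,\Omega}$ of PPR on the whole domain, which relies on the shape regularity (and quasi-uniformity for the inverse inequality) already built into our hypotheses on $\mathcal{T}_h$.
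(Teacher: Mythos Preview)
Your proposal is correct and follows essentially the same approach as the paper: the same decomposition (noting $H_hu_I=H_hu$ since $H_h$ depends only on nodal values), Theorem~\ref{pp} for the consistency term, and boundedness of $G_h$ together with the inverse estimate and Theorem~\ref{linearsuperclose} for the discrete term. Your remark that $\rho\le 1/2$ allows the $O(h)$ consistency bound to be absorbed into $O(h^\rho)$ makes explicit a point the paper leaves implicit.
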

\begin{proof}
   We decompose $Hu-H_hu_h$ as $(Hu-H_hu)+H_h(u_I-u_h)$, since $H_hu=H_hu_I$.
   Using the triangle inequality and the definition of $H_h$, we obtain
   \begin{align*}
       	\|H u - H_h u_h\|_{0, \Omega}&\le
	\|Hu-H_hu\|_{0, \Omega}+\|H_h(u_I-u_h)\|_{0, \Omega}\\
	&=\|Hu-H_hu\|_{0, \Omega}+\|G_h(G_h(u_I-u_h))\|_{0, \Omega}.
   \end{align*}
   The first term in the above expression is bounded by
   $h|u|_{3, \infty, \Omega}$ according to Theorem \ref{pp}.
   Since $G_h$ is a bounded linear operator \cite{naga2005},
   it follows that
   \begin{equation*}
       \|H_h(u_I-u_h)\|_{0, \Omega}\lesssim \|\nabla(G_h(u_I-u_h))\|_{0, \Omega}
   \end{equation*}
   Notice that $G_h (u_I-u_h)$ is a function in $S_h$ and hence  the inverse
   estimate \cite{ciarlet1978, brenner2008} can be applied. Thus,
   \begin{align*}
       \|H_h(u_I-u_h)\|_{0, \Omega}\lesssim h^{-1}\|G_h(u_I-u_h)\|_{0, \Omega}
       \lesssim h^{-1}\|u_I-u_h\|_{1, \Omega}
   \end{align*}
   and hence Theorem \ref{linearsuperclose} implies that
   \begin{align*}
       \|H_h(u_I-u_h)\|_{0, \Omega}
       \lesssim  h^\rho \|u\|_{3,\infty,\Omega} .
   \end{align*}
   Combining the above two estimates completes our proof.
\end{proof}

%\begin{remark}
%Our numerical tests  indicate that $H_hu_h$ superconverges to
 %$Hu$ at rate $O(h^2)$ for uniform meshes  of all four triangular mesh patterns.
%
 %If only the regular pattern is concerned, Lakhany and Whiteman
%proved an $O(h^2)$ superconvergence by repeating
%twice of middle points recovery proposed in their paper
%on rectangular domains \cite{lakhany1998}.
%\end{remark}

\subsection{Quadratic element}
We proceed to quadratic finite element space $S_h$.
According to \cite{huang2008}, a triangulation $\mathcal{T}_h$ is strongly
regular if any two adjacent triangles in $\mathcal{T}_h$ form an $O(h^2)$
approximate parallelogram. Huang and Xu  proved the following
superconvergence results in \cite{huang2008}.
\begin{theorem}
    If the triangulation $\mathcal{T}_h$ is uniform or strongly regular,
    then
    \begin{equation*}
	|u_h-u_I|_{1, \Omega}\lesssim h^3|u|_{4,  \Omega}.
    \end{equation*}
    \label{quadsuperclose}
\end{theorem}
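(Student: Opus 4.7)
The plan is to follow the standard Galerkin-orthogonality plus dual-pair cancellation framework, adapted to the quadratic Lagrange interpolant. First I would use \eqref{equ:orth} to write $B(u_I-u_h,v_h)=B(u_I-u,v_h)$ for every $v_h\in S_h$, and then invoke the discrete inf-sup hypothesis \eqref{equ:disinfsup} to reduce the theorem to the consistency estimate
\begin{equation*}
    \sup_{v_h\in S_h,\;|v_h|_{1,\Omega}=1} |B(u_I-u,v_h)| \lesssim h^{3}|u|_{4,\Omega}.
\end{equation*}

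Second, I would split the bilinear form as
\begin{equation*}
    B(u_I-u,v_h) = \int_{\Omega}\mathcal{D}\nabla(u_I-u)\cdot\nabla v_h\,dx + \int_{\Omega}\bigl(\mathbf{b}(u_I-u)\cdot\nabla v_h+c(u_I-u)v_h\bigr)\,dx.
\end{equation*}
The second integral is a zero-order/first-order term: standard interpolation bounds give $\|u_I-u\|_{0,\Omega}\lesssim h^{3}|u|_{3,\Omega}$ for quadratic elements, so this piece is at most $O(h^{3})|u|_{3,\Omega}|v_h|_{1,\Omega}$ directly. Thus all the work concentrates on the principal part $I := \int_{\Omega}\mathcal{D}\nabla(u_I-u)\cdot\nabla v_h\,dx$.

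Third, I would partition the triangulation into the two sets $\mathcal{T}_{h,1}$ (paired into $O(h^{2})$ parallelograms, which is the meaning of strong regularity) and $\mathcal{T}_{h,2}$. On each parallelogram pair $K_1\cup K_2$ I would write $\int_{K_1\cup K_2}\mathcal{D}\nabla(u_I-u)\cdot\nabla v_h$ in local coordinates, Taylor-expand the coefficient of $\mathcal{D}$ about the center, and exploit that $v_h|_{K_1\cup K_2}$ is a quadratic polynomial on an exact parallelogram, up to an $O(h^{2})$ perturbation. The symmetry of the parallelogram cancels the leading $O(h^{2})$ moment of $\nabla(u_I-u)$ against $\nabla v_h$—this is precisely the quadratic analogue of the linear parallelogram identity of Bank--Xu and Xu--Zhang—yielding a local contribution of order $h^{3}\|u\|_{4,K_1\cup K_2}\|\nabla v_h\|_{0,K_1\cup K_2}$. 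Summing over all pairs and applying Cauchy--Schwarz gives the desired bound on the $\mathcal{T}_{h,1}$ part. For the residual set $\mathcal{T}_{h,2}$, whose area is vanishing under strong regularity, a direct Hölder estimate combined with the quadratic interpolation bound $\|\nabla(u_I-u)\|_{0,\infty,K}\lesssim h^{2}|u|_{3,\infty,K}$ absorbs this term into the same $O(h^{3})$ order.

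The main obstacle is Step~three: establishing the explicit parallelogram cancellation for \emph{quadratic} interpolation. Unlike the linear case, where the gradient of $u_I-u$ is an affine function per triangle and the cancellation is a short computation, in the quadratic case $\nabla(u_I-u)$ has a nontrivial quadratic part involving the fourth derivatives of $u$, and $\nabla v_h$ is itself linear on each triangle. One must therefore track the second-order moments of the interpolation error relative to the parallelogram's symmetry axes and verify that the odd-order contributions vanish exactly on an exact parallelogram and are perturbed only by $O(h^{2})$ on an approximate one, producing the extra half-power that upgrades the expected $O(h^{2})$ bound to $O(h^{3})$. This is the technical core carried out in Huang--Xu \cite{huang2008}, which we invoke directly.
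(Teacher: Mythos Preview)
The paper does not actually prove this theorem: it is quoted verbatim as a result of Huang--Xu \cite{huang2008} and used as a black box for the subsequent Hessian recovery analysis. Your proposal therefore goes well beyond what the paper does, and in the end you too defer the hard step to \cite{huang2008}, so in substance you and the paper agree.

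As a sketch of the Huang--Xu argument your outline is sound: Galerkin orthogonality plus the discrete inf--sup condition reduces matters to the consistency functional, the lower-order terms of $B$ are handled by the $O(h^{3})$ $L^{2}$ interpolation bound, and the principal part is controlled by the parallelogram cancellation identity for quadratic elements. One small mismatch worth noting: the partition $\mathcal{T}_{h,1}\cup\mathcal{T}_{h,2}$ you import belongs to Condition~$(\sigma,\alpha)$ from the linear case; under the hypothesis ``uniform or strongly regular'' of the present theorem \emph{every} adjacent pair already forms an $O(h^{2})$ parallelogram, so the residual set $\mathcal{T}_{h,2}$ is empty and your $W^{3,\infty}$ detour for that set is unnecessary (and would otherwise introduce an $|u|_{3,\infty,\Omega}$ term not present in the stated estimate). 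A second minor point: the inf--sup condition \eqref{equ:disinfsup} is stated with the full $H^{1}$ norm, so your normalization should be $\|v_h\|_{1,\Omega}=1$ rather than $|v_h|_{1,\Omega}=1$; this does not affect the argument since the consistency bound you derive controls $\|v_h\|_{1,\Omega}$ just as well.
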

Based on the above theorem, we obtain the following superconvergent result.
\begin{theorem}
    Suppose that the solution of \eqref{variational} belongs to
    $H^{4}(\Omega)$ and $\mathcal{T}_h$ is uniform or strongly regular.
     Then we have
    \begin{equation*}
	\|H u - H_h u_h\|_{0, \Omega}\le h^{2}
	\|u\|_{4, \Omega}.
    \end{equation*}
    \label{quadsuperconvergent}
\end{theorem}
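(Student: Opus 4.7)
The plan is to mirror the proof of Theorem \ref{linearsuperconvergent} almost verbatim, replacing the linear supercloseness estimate with the quadratic one from Theorem \ref{quadsuperclose}. The essential observation is that the nodal functional nature of PPR means $H_h u = H_h u_I$ for $u_I\in S_h$ interpolating $u$ at the Lagrange nodes, so we may split
\begin{equation*}
 Hu - H_h u_h = (Hu - H_h u_I) + H_h(u_I - u_h).
\end{equation*}
The first piece is a pure consistency term and the second is driven by supercloseness.

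For the consistency piece, I would invoke Theorem \ref{pp} with $k=2$ elementwise and sum up: since $u\in H^4(\Omega)$ (and a regularity bootstrap or embedding gives us the seminorm we need locally), we get $\|Hu - H_h u_I\|_{0,\Omega}\lesssim h^2 |u|_{4,\Omega}$, which is exactly the order we want. The main cosmetic issue here is converting the local $W^{2,\infty}$-type bound of Theorem \ref{pp} into a global $L^2$-bound; this is standard and follows from summing over patches using shape regularity.

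For the supercloseness piece I would use, in succession, (i) the boundedness of $G_h$ to get $\|H_h w\|_{0,\Omega}\lesssim \|\nabla G_h w\|_{0,\Omega}$, (ii) the inverse estimate on $S_h\times S_h$ to get $\|\nabla G_h w\|_{0,\Omega}\lesssim h^{-1}\|G_h w\|_{0,\Omega}$, and (iii) boundedness of $G_h$ again to get $\|G_h w\|_{0,\Omega}\lesssim |w|_{1,\Omega}$, so that altogether
\begin{equation*}
 \|H_h(u_I-u_h)\|_{0,\Omega}\lesssim h^{-1}|u_I-u_h|_{1,\Omega}.
\end{equation*}
Applying Theorem \ref{quadsuperclose} gives $|u_I-u_h|_{1,\Omega}\lesssim h^3 |u|_{4,\Omega}$, and hence this piece is $O(h^2|u|_{4,\Omega})$ as well. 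Combining the two bounds finishes the proof.

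The main potential obstacle is justifying the chain of inequalities in step (iii): one has to make sure the inverse estimate is applied on a space where $G_h w$ legitimately lives and that shape regularity is strong enough on the strongly regular mesh to make the standard inverse inequality valid with a uniform constant. A secondary issue is being careful that the edge midpoint nodes of the quadratic element do not break the identity $H_h u = H_h u_I$; since PPR uses only nodal values and $u_I$ agrees with $u$ at all such nodes, this identity does hold, but it should be remarked explicitly. Everything else is a direct transcription of the linear-element argument.
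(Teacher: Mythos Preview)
Your proposal is correct and follows the paper's own route exactly: the paper merely remarks that the proof is similar to that of Theorem~\ref{linearsuperconvergent}, replacing the linear supercloseness bound by Theorem~\ref{quadsuperclose} and invoking the inverse estimate, which is precisely the decomposition and chain of inequalities you outline. Your caveats about the $W^{4}_{\infty}$-versus-$H^{4}$ regularity in the consistency term and the role of edge-midpoint nodes are reasonable side remarks, but they do not alter the argument, and the paper itself glosses over them.
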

\begin{proof}
    The proof is similar to the proof of Theorem \ref{linearsuperconvergent}
    by using Theorem \ref{quadsuperclose} and  the inverse estimate.
\end{proof}
\begin{remark}
    Theorem \ref{quadsuperconvergent} can be generalized to mildly structured
    meshes as in \cite{huang2008}.
\end{remark}

\subsection{Translation invariant element of any order}
First, we observe that the Hessian recovery operator results in a difference
quotient.  It is due to the fact that $G_h$ is a difference quotient
\cite{zhang2005} and the composition of two difference quotients is still a
difference quotient. Let us take linear element on uniform triangular mesh of
the regular pattern as an example, see Figure \ref{regular}.
The recovered second order derivative  at a nodal point $z$ is
\begin{equation*}
\begin{split}
  (H_h^{xx}u_h)(z) = \frac{1}{36h^2}(&-12u_0+2u_1-4u_2-4u_3+2u_4-4u_5-4u_6
   +4u_7 + 4u_8 +u_9\\
   &-2u_{10}+u_{11}+4u_{12}+4u_{13}+
  4u_{14}+u_{15}-2u_{16}+u_{17}+4u_{18}).
\end{split}
\end{equation*}
Let $\phi_j$ be the nodal shape functions. Since $\phi_{z}(z') = \delta_{zz'}$,
it follows that
\begin{align*}
  &(H_h^{xx}u_h)\phi_0(x, y) \\
  = &\frac{1}{36h^2}[
  -12u_0\phi_0(x, y)+2u_1\phi_1(x+h, y)-4u_2\phi_2(x+h, y+h)\\
  &-4u_3\phi_3(x, y+h)
  +2u_4\phi_4(x-h, y)-4u_5\phi_5(x-h, y-h)\\
  &-4u_6\phi_6(x, y-h) +4u_7\phi_7(x+2h, y) + 4u_8\phi_8(x+2h, y+h)\\
  &+u_9\phi_9(x+2h, y+2h)-2u_{10}\phi_{10}(x+h, y+2h)+u_{11}\phi_{11}(x, y+2h)\\
  &+4u_{12}\phi_{12}(x-h, y+h)+4u_{13}\phi_{13}(x-2h, y)+
  4u_{14}\phi_{14}(x-2h, y-h)\\
  &+u_{15}\phi_{15}(x-2h, y-2h)-2u_{16}\phi_{16}(x-h, y-2h)+
  u_{17}\phi_{17}(x, y-2h)\\
&+4u_{18}\phi_{18}(x+h, y-h)].
\end{align*}
The translations are in the directions of $\ell_1=(1, 0)$,
$\ell_2=(0, 1)$, $\ell_3=(\frac{\sqrt{2}}{2}, \frac{\sqrt{2}}{2})$,
$\ell_4=(\frac{\sqrt{2}}{2}, -\frac{\sqrt{2}}{2})$,
$\ell_5=(\frac{\sqrt{5}}{5}, \frac{2\sqrt{5}}{5})$, and
$\ell_6=(\frac{2\sqrt{5}}{5}, \frac{\sqrt{5}}{5})$.
Therefore, we can express the recovered second order derivative as
\begin{equation}
  (H_h^{xx}u_h)(z) = \sum_{|\nu|\le M} \sum_{i=1}^{6}
  C^{i}_{\nu, h}u_h(z+\nu h\ell_i),
  \label{equ:hessian}
\end{equation}
for some integer $M$.

Let all coefficients in the bilinear form  $B(\cdot, \cdot)$ be constant. Then
\begin{equation*}
  B(T^{\ell}_{\nu\tau}(u-u_h), v) = B(u-u_h, T^{\ell}_{-\nu\tau}v)
  =B(u-u_h, (T^{\ell}_{\nu\tau})^{*}v) = 0.
\end{equation*}
Since $H_{h}^{xx}$ is a difference operator constructed from translation of
type \eqref{equ:hessian}, it follows that
\begin{equation}
  B(H_h^{xx}(u-u_h), v) = B(u-u_h, (H^{xx}_h)^{*}v) = 0,
  \quad v\in S^{\text{comp}}_h(\Omega_1).
  \label{equ:test}
\end{equation}
Therefore,  Theorem 5.5.2 of \cite{wahlbin1995} (with $F\equiv 0$)
implies that
\begin{equation}
  \begin{split}
  \|H_{h}^{xx}(u-u_h)\|_{0, \infty, \Omega_0}
  \lesssim&
  \left(\ln\frac{d}{h}\right)^{\bar{r}}\min_{v\in S_h}
  \|H_h^{xx}u-v\|_{0, \infty, \Omega_1}\\
  &+ d^{-s-\frac{2}{q}}
  \|H_{h}^{xx}(u-u_h)\|_{-s, q, \Omega_1}.
  \label{equ:result}
  \end{split}
\end{equation}
Here $\bar{r} = 1$ for linear element and $\bar{r}=0$ for higher order element.
Note that $H_h^{xx}u\in S_h$ and hence the first term on the right hand
side of \eqref{equ:result} can be estimated by standard approximation theory
under the assumption that the finite element space includes piecewise
polynomial of degree $k$:
\begin{equation}
  \min_{v\in S_h}
  \|H_h^{xx}u-v\|_{0, \infty, \Omega_1}
  \lesssim h^{k+1}|u|_{k+3, \infty, \Omega_1},
  \label{equ:first}
\end{equation}
provided $u\in W^{k+3}_{\infty}(\Omega)$, see \cite{brenner2008, ciarlet1978}.
It remains to attack the second term on the right hand side of
\eqref{equ:result}. Note that
\begin{equation}
  \|H_{h}^{xx}(u-u_h)\|_{-s, q, \Omega_1}
  =\sup_{\phi\in C^{\infty}_0(\Omega_1), \|\phi\|_{s, q', \Omega_1}=1}
  (H^{xx}_h(u-u_h), \phi).
  \label{equ:second}
\end{equation}
Here $\frac{1}{q}+\frac{1}{q'} = 1$ and
\begin{equation}
  \begin{split}
    (H^{xx}_h(u-u_h), \phi)& = (u-u_h, (H^{xx}_h)^*\phi)\\
    &\lesssim \|u-u_h\|_{0, \infty, \Omega_2}
    \|(H^{xx}_h)^*\phi\|_{0, 1, \Omega_2}\\
    &\lesssim \|u-u_h\|_{0, \infty, \Omega_2},
  \end{split}
  \label{equ:mid}
\end{equation}
where we use the fact that $\|(H^{xx}_h)^*\phi\|_{0, 1, \Omega_2}$ is bounded
uniformly with respect to $h$ when $s\ge 1$.  We now once again apply
Theorem 5.5.1 from \cite{wahlbin1995} to $ \|u-u_h\|_{0, \infty, \Omega_2}$
with $\Omega_2\subset\subset \Omega$ separated by $d$, then
\begin{equation}
  \begin{split}
    \|u-u_h\|_{0, \infty, \Omega_2} \lesssim&
    \left(\ln\frac{d}{h}\right)^{\bar{r}}\min_{v\in S_h}
  \|u-v\|_{0, \infty, \Omega}\\
  &+ d^{-s-\frac{2}{q}}
  \|u-u_h\|_{-s, q, \Omega}.
  \end{split}
  \label{equ:temp}
\end{equation}

If the separation parameter $d = O(1)$ , then we combine
\eqref{equ:result}, \eqref{equ:first} and \eqref{equ:temp} to obtain
\begin{equation}
  \|H_{h}^{xx}(u-u_h)\|_{0, \infty, \Omega_0}
  \lesssim \left(\ln\frac{1}{h}\right)^{\bar{r}}
  h^{k+1}\|u\|_{k+3, \infty, \Omega}
  + \|u-u_h\|_{-s, q, \Omega}.
  \label{equ:any}
\end{equation}
Following the same argument, we can establish the same result for
$H_h^{xy}$, $H_h^{yx}$, and $H_h^{yy}$. Therefore, \eqref{equ:any} is
satisfied by replacing $H_h^{xx}$ with $H_h$:
\begin{equation}
  \|H_{h}(u-u_h)\|_{0, \infty, \Omega_0}
  \lesssim \left(\ln\frac{1}{h}\right)^{\bar{r}}
  h^{k+1}\|u\|_{k+3, \infty, \Omega}
  + \|u-u_h\|_{-s, q, \Omega}.
  \label{equ:error}
\end{equation}

Now we are in a perfect position to prove our main result for
translation invariant finite element space of any order.
\begin{theorem}
  Let all the coefficients in the bilinear operator $B(\cdot, \cdot)$
  be constant;  let $\Omega_0\subset\subset \Omega_2 \subset\subset \Omega$
  be separated by $d=O(1)$; let the finite element space $S_h$, which includes
  piecewise polynomials of degree $k$, be translation invariant in the
  directions required by the Hessian recovery operator $H_h$ on
  $\Omega_2$; and let $u\in W^{k+3}_{\infty}(\Omega)$.
  Assume that Theorem 5.2.2 from \cite{wahlbin1995} is applicable. Then
  \begin{equation}
   \|Hu - H_{h}u_h\|_{0, \infty, \Omega_0}
  \lesssim \left(\ln\frac{1}{h}\right)^{\bar{r}}
  h^{k+1}\|u\|_{k+3, \infty, \Omega}
  + \|u-u_h\|_{-s, q, \Omega}.
    \label{equ:main}
  \end{equation}
  for some $s\ge 0$ and $q\ge 1$.
  \label{thm:ultra}
\end{theorem}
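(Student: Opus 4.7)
The plan is to use the linearity of $H_h$ and the triangle inequality to split
\begin{equation*}
Hu - H_h u_h = (Hu - H_h u) + H_h(u - u_h),
\end{equation*}
where $H_h u$ is interpreted through the nodal-sampling construction of Section 3 (which makes sense directly for any $u \in C(\bar{\Omega})$, and coincides with $H_h u_I$ because PPR only uses nodal values). Essentially all of the hard work for the second summand has already been assembled in the derivation preceding the statement, culminating in (\ref{equ:error}). The only remaining task in the proof itself is to upgrade the pointwise nodal estimate from Theorem \ref{pp} to the $L^\infty$ bound on $\Omega_0$ required for the first summand.

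For the first summand, I would introduce the Lagrange interpolant $\Pi_h$ into $S_h$, applied componentwise to the Hessian matrix, and decompose further as
\begin{equation*}
Hu - H_h u = (Hu - \Pi_h Hu) + \Pi_h(Hu - H_h u).
\end{equation*}
The piece $Hu - \Pi_h Hu$ is controlled by standard interpolation theory on quasi-uniform meshes, giving $\|Hu - \Pi_h Hu\|_{0,\infty,\Omega_0} \lesssim h^{k+1}|Hu|_{k+1,\infty,\Omega_2} \lesssim h^{k+1}|u|_{k+3,\infty,\Omega_2}$. The piece $\Pi_h(Hu - H_h u)$ is a piecewise polynomial of degree $\le k$ whose nodal values are exactly $(Hu - H_h u)(z)$ at $z \in \mathcal{N}_h$; by the standard $L^\infty$-nodal equivalence in $S_h$ together with the translation-invariant pointwise estimate in Theorem \ref{pp}, we obtain $\|\Pi_h(Hu - H_h u)\|_{0,\infty,\Omega_0} \lesssim \max_{z \in \mathcal{N}_h \cap \bar{\Omega}_0}|(Hu - H_h u)(z)| \lesssim h^{k+1}|u|_{k+3,\infty,\Omega_2}$. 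Combining these two pieces with (\ref{equ:error}) and absorbing $\Omega_2 \subset \Omega$ into the right-hand side yields (\ref{equ:main}).

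The genuine technical effort sits in the pre-statement derivation, not in the final assembly. The two crucial ingredients to verify there are: (i) realising each scalar component of $H_h$ as a finite-difference operator of the form (\ref{equ:hessian}), so that under translation invariance its adjoint maps $S_h^{\text{comp}}(\Omega_1)$ into $S_h^{\text{comp}}(\Omega)$ and Galerkin orthogonality (\ref{equ:orth}) is preserved to give (\ref{equ:test}); and (ii) checking that the hypotheses of Wahlbin's interior maximum-norm estimate (Theorem 5.5.2 of \cite{wahlbin1995}) are met by $B(\cdot,\cdot)$, which is where the assumption of constant coefficients is consumed. The estimate is then applied twice — once to $H_h(u - u_h)$ producing (\ref{equ:result}), and once again to $u - u_h$ itself inside the duality bound (\ref{equ:mid})--(\ref{equ:temp}) — and the desired bound is pieced together under the $d = O(1)$ separation assumption. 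With all of this in hand, the theorem reduces to the triangle-inequality assembly described above.
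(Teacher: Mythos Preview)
Your proposal is correct and takes essentially the same approach as the paper. Once you unfold your further decomposition $Hu - H_hu = (Hu - \Pi_h Hu) + \Pi_h(Hu - H_hu)$ and use that $H_hu \in S_h^2\times S_h^2$ so $\Pi_h H_hu = H_hu$, you recover exactly the paper's three-term split $Hu - H_hu_h = (Hu - (Hu)_I) + ((Hu)_I - H_hu) + H_h(u-u_h)$, handled by standard interpolation theory, the nodal estimate of Theorem~\ref{pp}, and the pre-statement bound \eqref{equ:error}, respectively.
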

\begin{proof}
  We decompose
  \begin{equation}
    Hu - H_h u_h = (Hu - (Hu)_I) + ((Hu)_I - H_hu) + H_h(u-u_h),
    \label{equ:decomp}
  \end{equation}
  where $(Hu)_I \in S_h^2\times S_h^2$ is the standard Lagrange interpolation
  of $Hu$ in the finite element space $S_h$. By the standard
  approximation theory, we obtain
  \begin{equation}
   \|Hu - (Hu)_I\|_{0, \infty, \Omega}
   \lesssim h^{k+1} |Hu|_{k+1, \infty, \Omega}
   \lesssim h^{k+1} |u|_{k+3, \infty, \Omega}.
    \label{equ:mainfirst}
  \end{equation}
  For the second term, using Theorem \ref{pp},  we have
  \begin{equation}
    \begin{split}
    \|(Hu)_I - H_h u\|_{0, \infty, \Omega_0}=&
    \|\sum_{z \in \mathcal{N}_h}((Hu)(z) - (H_hu)(z))\phi_{z}\|
    _{0, \infty, \Omega_0}\\
    \lesssim&
    \max_{z\in \mathcal{N}_h \cap \Omega_0} |(Hu)(z) - (H_hu)(z)|\\
    \lesssim&
    h^{k+1}|u|_{k+3, \infty, \Omega}.
    \end{split}
    \label{equ:mainsecond}
  \end{equation}
  The last term in \eqref{equ:decomp} is bounded by \eqref{equ:error}.
  The conclusion follows by combining \eqref{equ:error},
  \eqref{equ:mainfirst} and \eqref{equ:mainsecond}.
\end{proof}

\begin{remark}
 Theorem \ref{thm:ultra} is a ultraconvergence result under the condition
 \begin{equation*}
   \|u-u_h\|_{-s, q, \Omega} \lesssim h^{k+\sigma}, \quad \sigma > 0.
 \end{equation*}
 The reader is referred to \cite{nitsche} for negative norm estimates.
\end{remark}
\section{Numerical tests}
In this section, two numerical examples are  provided to illustrate
 our Hessian recovery method.
 The first one is designed to
demonstrate the polynomial preserving property of the proposed Hessian recovery
method. The second one is devoted  to
a comparison of our method and some existing Hessian recovery methods
in the literature on  both uniform and unstructured meshes.

In order to evaluate the performance of Hessian recovery methods,
we split mesh nodes
$\mathcal{N}_h$ into $\mathcal{N}_{h,1}$ and $\mathcal{N}_{h,2}$, where
$\mathcal{N}_{h,2}=\{z\in \mathcal{N}_h: \text{dist}(z,\partial\Omega)\leq L\}$
denotes the set of nodes near boundary and
$\mathcal{N}_{h,1} = \mathcal{N}_h\setminus \mathcal{N}_{h, 2}$
denotes rest interior nodes.
 Now, we can define
$$\Omega_{h, 1}=\bigcup\{\tau\in \mathcal{T}_h: \tau\ \text{has\ all\
of\ its\ vertices\ in}\ \mathcal{N}_{h,1}\}, $$ and
$\Omega_{h, 2}=\Omega \setminus \Omega_{h, 1} $.
In the following examples  we choose  $L=0.1$.

Let $\tilde{G}_h^{}$  be the weighted average recovery operator.
Then we define
\begin{equation*}
    H^{ZZ}_hu_h=
    \begin{pmatrix} \tilde{G}_h(\tilde{G}_h^xu_h),&\tilde{G}_h(\tilde{G}_h^yu_h)
    \end{pmatrix},
\end{equation*}
and
\begin{equation*}
    H^{LS}_hu_h=
    \begin{pmatrix} \tilde{G}_h(G_h^xu_h),&\tilde{G}_h(G_h^yu_h)
    \end{pmatrix}.
\end{equation*}
For any nodal point $z$, fit a quadratic polynomial $p_z$ at $z$ as PPR.
Then $H^{QF}_h$ is defined as
\begin{equation*}
    H^{QF}_hu_h(z)=
    \begin{pmatrix} \frac{\partial^2 p_{z}}{\partial x^2}(0, 0)&
            \frac{\partial^2 p_{z}}{\partial x\partial y}(0, 0)\\
            \frac{\partial^2 p_{z}}{\partial y\partial x}(0, 0)&
	    \frac{\partial^2 p_{z}}{\partial y^2}(0, 0)
    \end{pmatrix}.
\end{equation*}
$H^{ZZ}_h$, $H^{LS}_h$, and $H^{QF}_h$ are the first three Hessian recovery
methods in \cite{picasso2011}.  To compare them, define
    \begin{align*}
      &De = \|H_h u_h - Hu\|_{L^2(\Omega_{1, h})},\quad
    De^{ZZ} = \|H^{ZZ}_h u_h - Hu\|_{L^2(\Omega_{1, h})},\\
    &De^{LS} = \|H^{LS}_h u_h - Hu\|_{L^2(\Omega_{1, h})},\quad
    De^{QF} = \|H^{QF}_h u_h - Hu\|_{L^2(\Omega_{1, h})}.
    \end{align*}
where $u_h$ is the finite element solution.

{\bf Example 1.} Consider the following function
\begin{equation}
    u(x, y) = sin(\pi x)sin(\pi y),
    \quad (x, y) \in \Omega = (0, 1)\times (0, 1).
    \label{cubic}
\end{equation}
Let $u_I$ be the standard Lagrangian interpolation of $u$ in the finite element
space.  To validate Theorem \ref{pp}, we apply the Hessian recovery operator
$H_h$ to $u_I$ and consider the discrete maximum error of $H_hu_I - Hu$ at all
vertices in $N_{1, h}$. First, linear element on uniform meshes
are taken into account.
Figures \ref{ex1r} -\ref{ex1u} display the numerical results.
The numerical errors decrease at a rate of $O(h^{2})$ for
 four different pattern uniform meshes.  It means the proposed Hessian
recovery method preserves polynomial of degree $3$ for linear element on
uniform meshes.

 Next, we consider unstructured meshes. We start from an initial mesh
generated by EasyMesh\cite{easymesh} as shown in Figure \ref{delaunay},
followed by four levels of refinement using bisection.
Figure \ref{ex1d} shows that the recovered Hessian $H_hu_I$ converges to the exact Hessian
at rate $O(h)$. This coincides with the result in Theorem \ref{poly}
that $H_h$ only preserves polynomials of  degree 2 on general unstructured meshes
\begin{figure}[!h]
  \centering
  \begin{minipage}[c]{0.5\textwidth}
  \centering
  \includegraphics[width=0.9\textwidth]{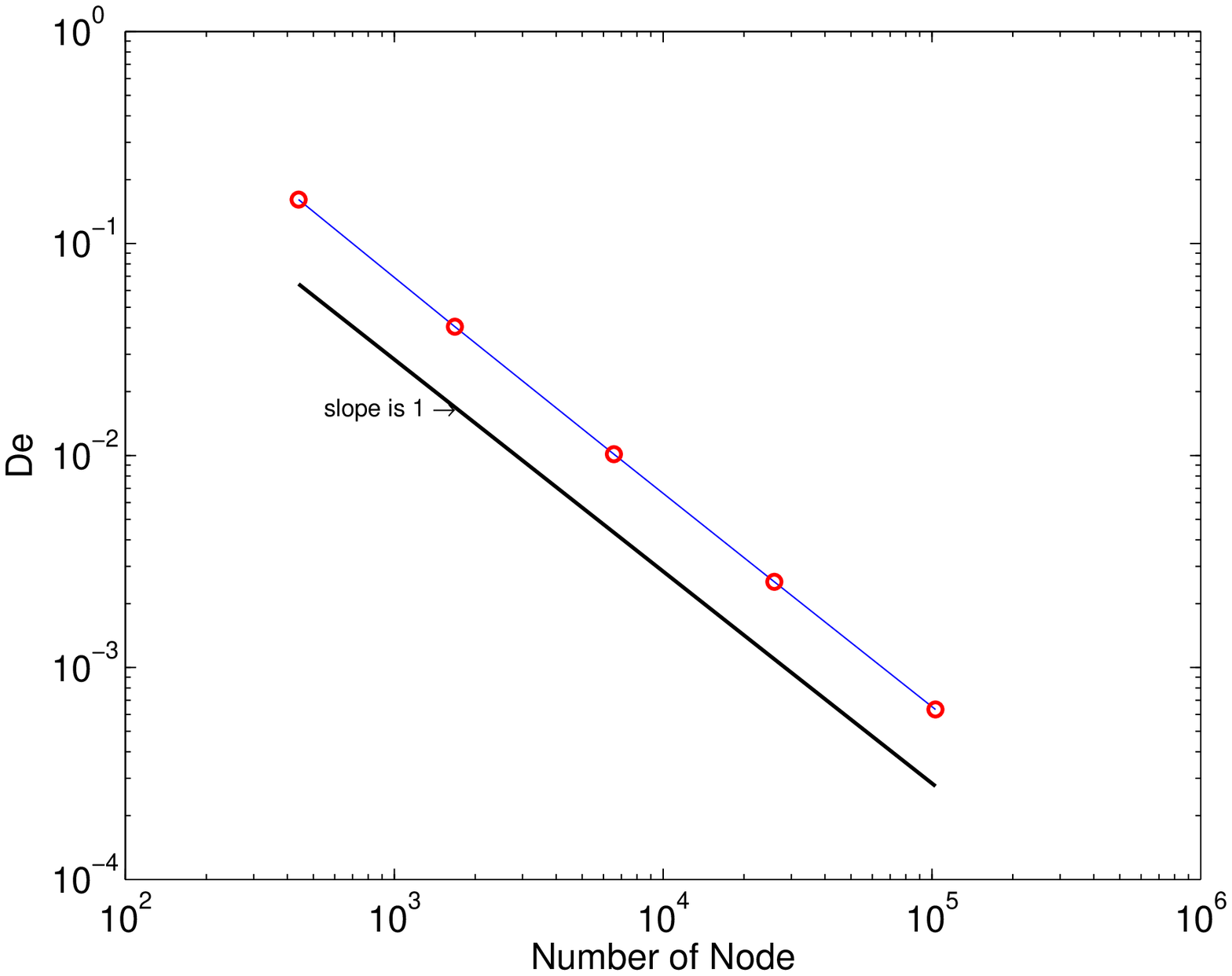}
  \caption{Linear Element: Regular Pattern}
\label{ex1r}
\end{minipage}%
 \begin{minipage}[c]{0.5\textwidth}
  \centering
  \includegraphics[width=0.9\textwidth]{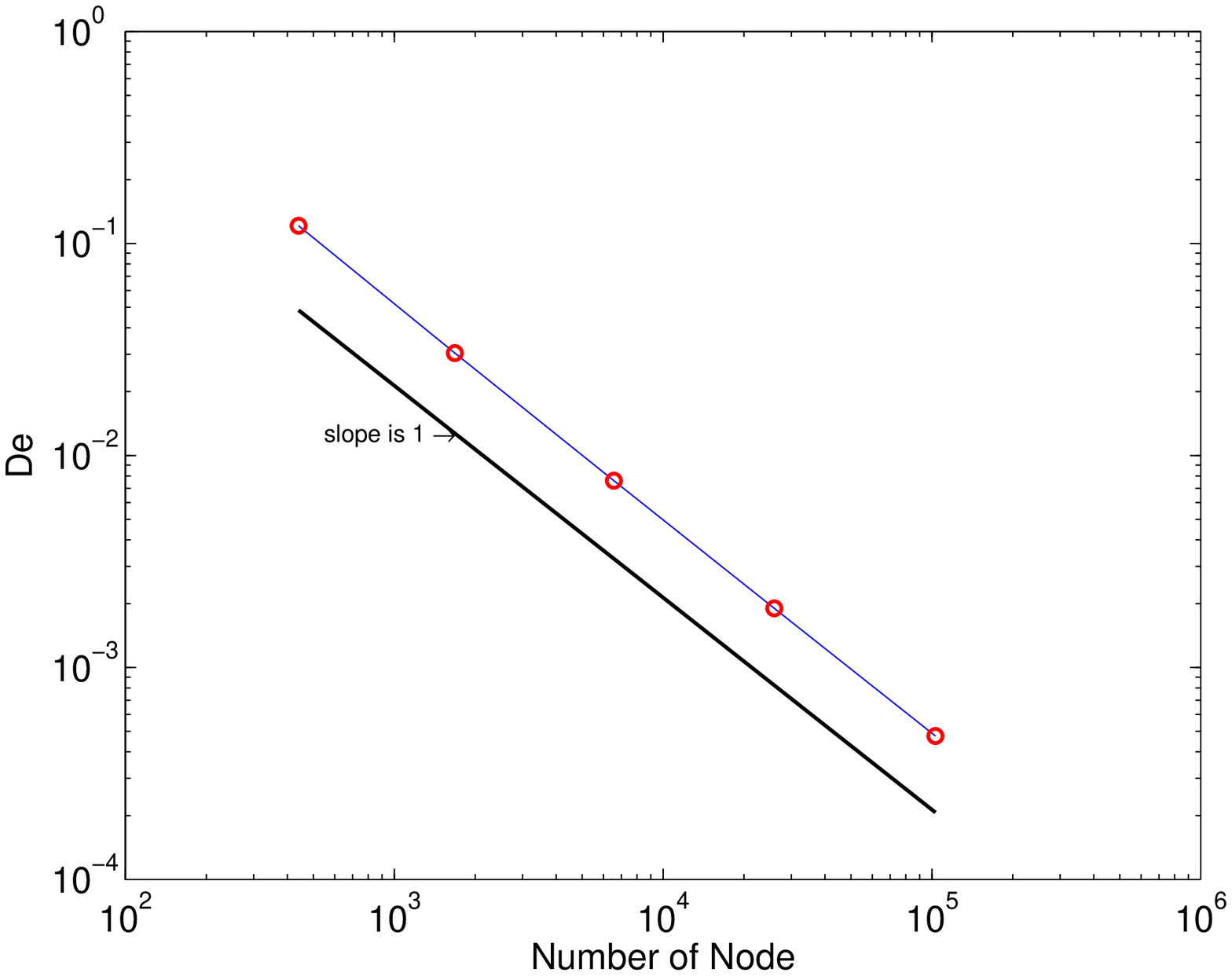}
   \caption{Linear Element: Chevron Pattern}
\label{ex1h}
\end{minipage}
\end{figure}

\begin{figure}[!h]
  \centering
  \begin{minipage}[c]{0.5\textwidth}
  \centering
  \includegraphics[width=0.9\textwidth]{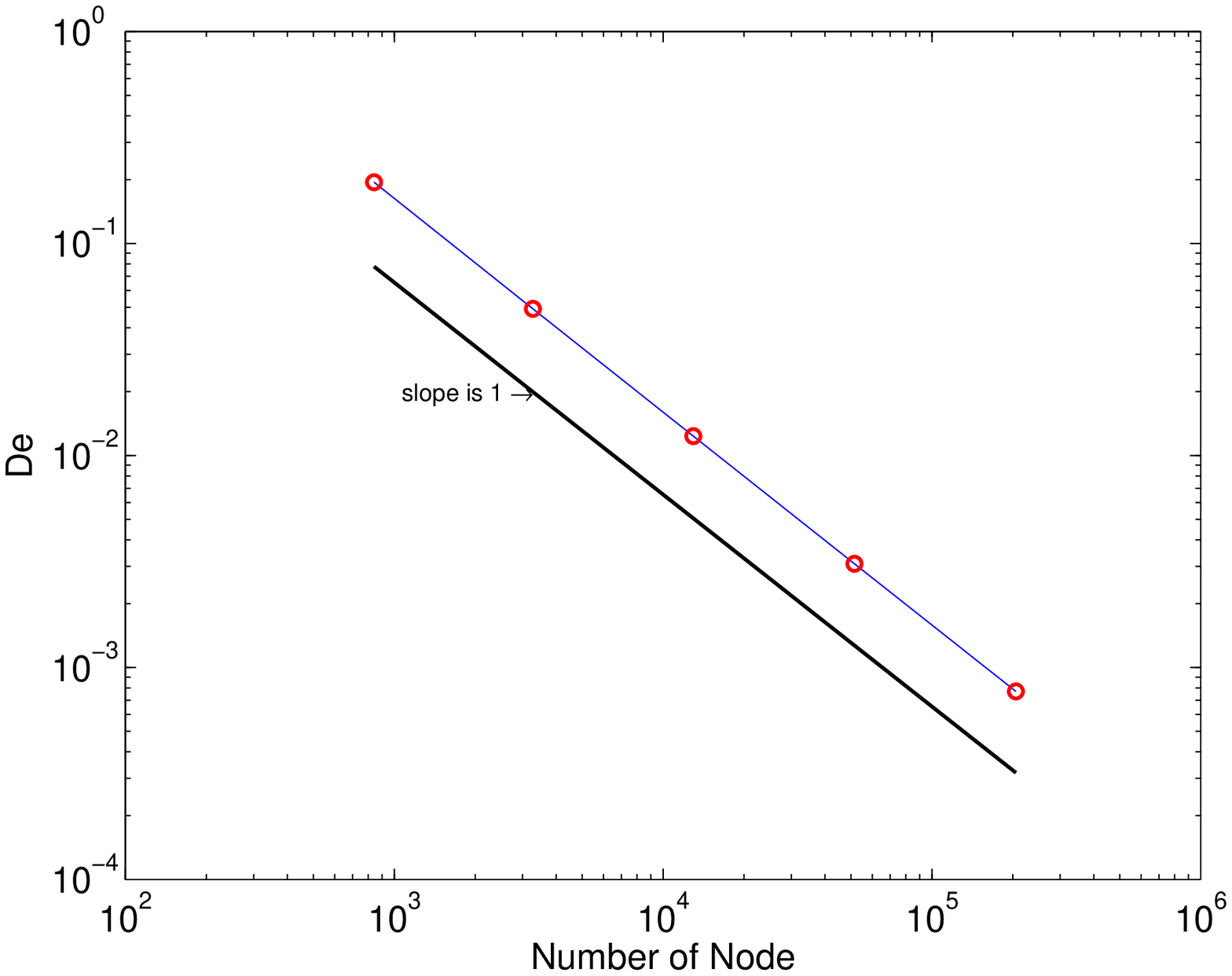}
  \caption{Linear Element:  Criss-cross Pattern}
\label{ex1c}
\end{minipage}%
 \begin{minipage}[c]{0.5\textwidth}
  \centering
  \includegraphics[width=0.9\textwidth]{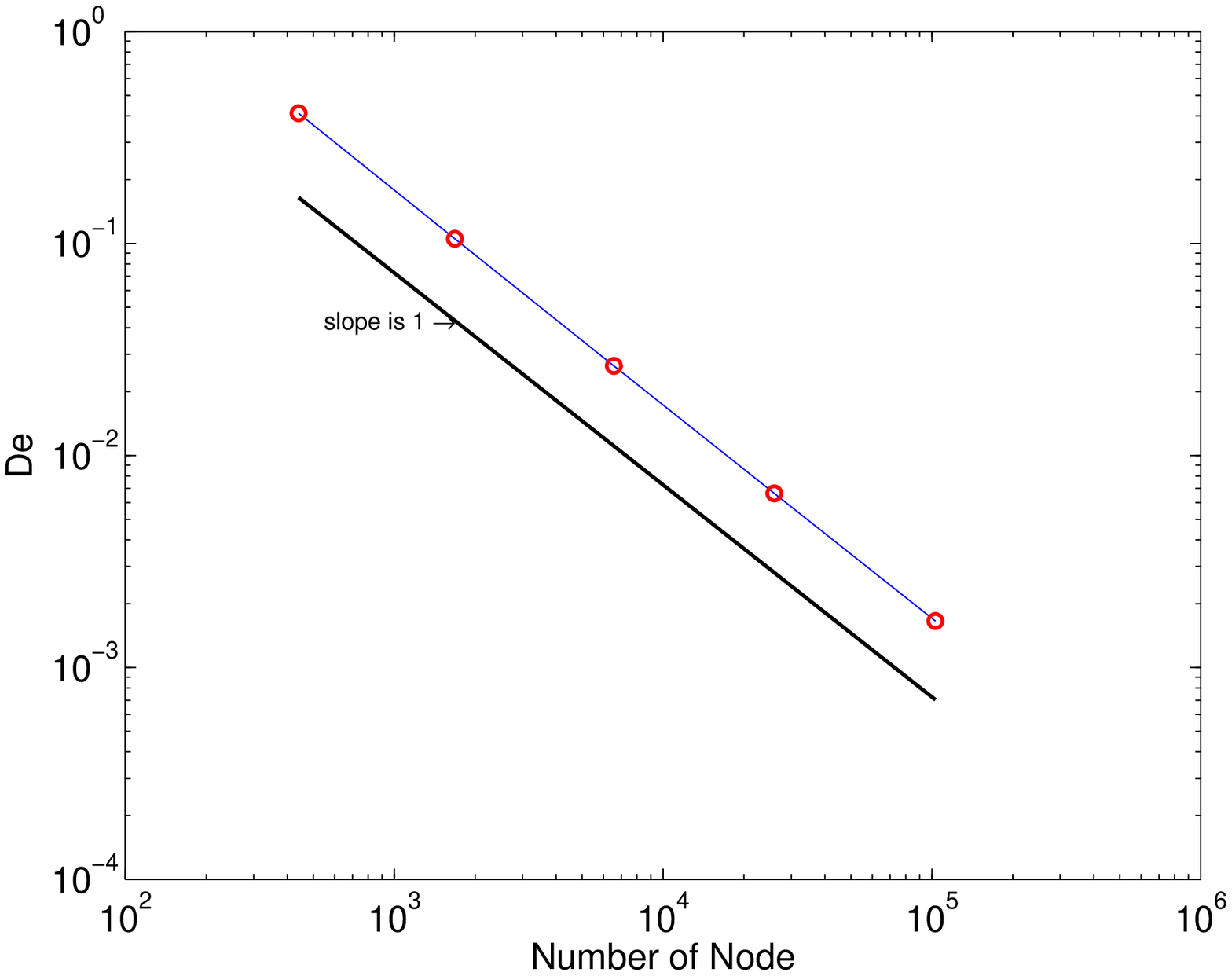}
   \caption{Linear Element:  Union-Jack Pattern}
\label{ex1u}
\end{minipage}
\end{figure}

\begin{figure}[!h]
  \centering
  \begin{minipage}[c]{0.5\textwidth}
  \centering
  \includegraphics[width=0.9\textwidth]{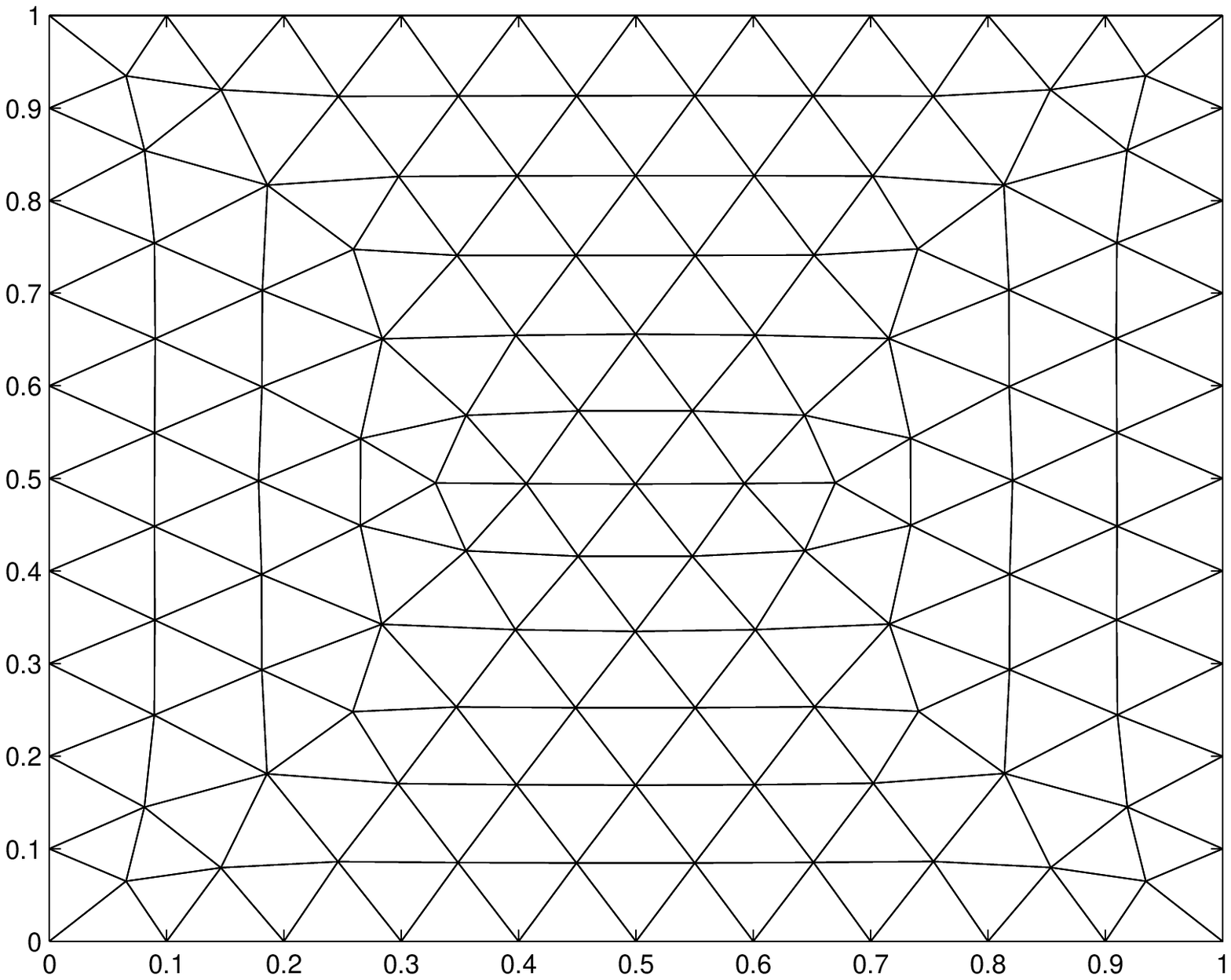}
  \caption{Delaunay}
\label{delaunay}
\end{minipage}%
 \begin{minipage}[c]{0.5\textwidth}
  \centering
  \includegraphics[width=0.9\textwidth]{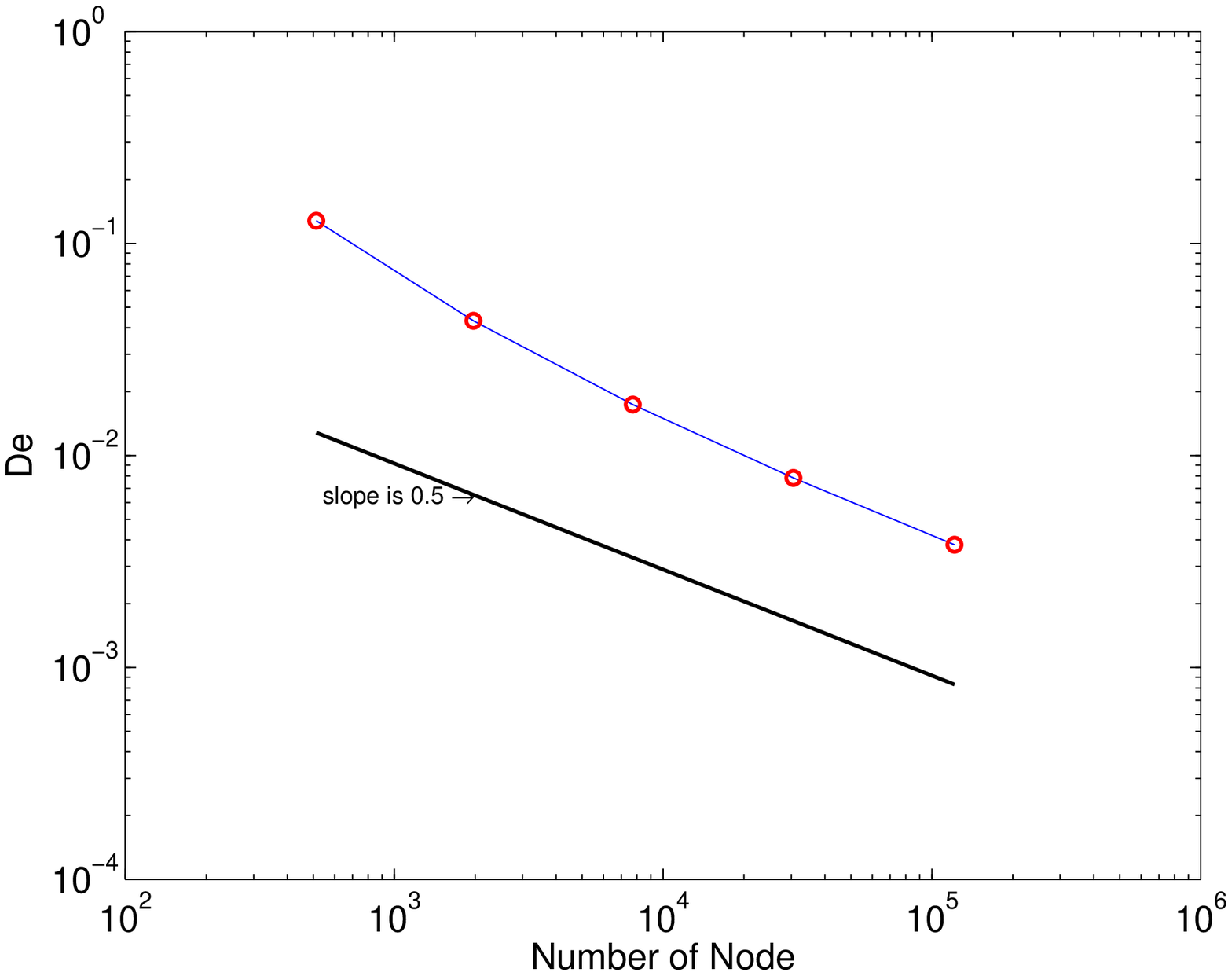}
   \caption{Linear Element: Delaunay Mesh}
\label{ex1d}
\end{minipage}
\end{figure}
Then we turn to quadratic element.
We test the discrete error of recovered Hessian $H_hu_I$ and the exact Hessian
$Hu$ using uniform meshes of regular pattern and the same Delaunay meshes.
Similarly, we define $\|\cdot\|_{\infty, h}$ as a discrete  maximum norm
at all vertices and edge centers in an interior region  $\Omega_{1, h}$.
The result of uniform mesh of regular pattern is reported
in Figure \ref{ex2r}.
As predicted by Theorem \ref{pp}, $H_hu_I$ converges to $Hu$ at rate of
$O(h^4)$ which implies $H_h$ preserves polynomials of degree $5$ for
quadratic element on uniform triangulation.
For unstructured mesh, we observe that
$H_hu_I$ approximates $Hu$ at a rate of $O(h^2)$ from Figure \ref{ex2d}.

\begin{figure}[!h]
  \centering
  \begin{minipage}[c]{0.5\textwidth}
  \centering
  \includegraphics[width=0.9\textwidth]{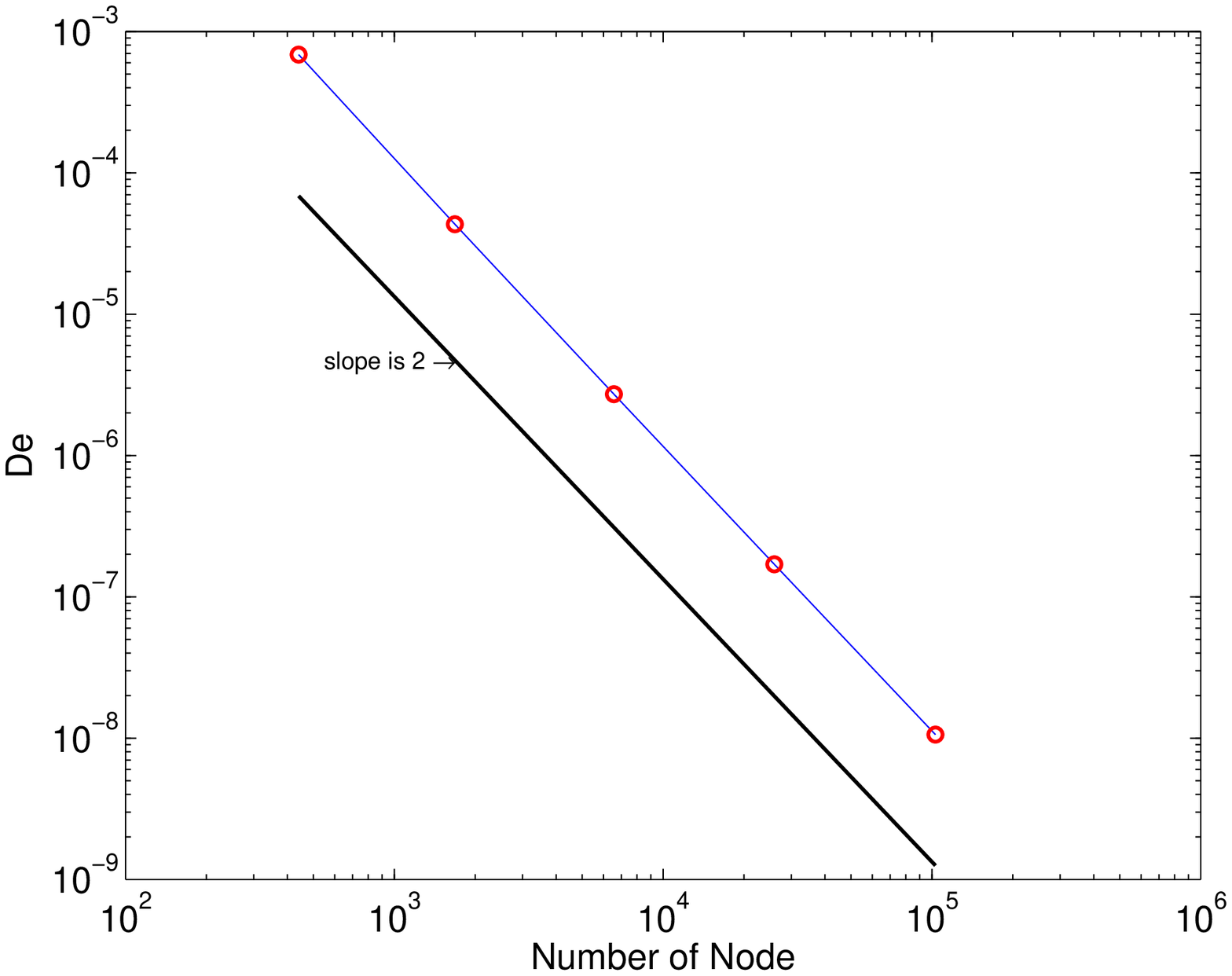}
  \caption{Quadratic Element: Regular Pattern}
\label{ex2r}
\end{minipage}%
 \begin{minipage}[c]{0.5\textwidth}
  \centering
  \includegraphics[width=0.9\textwidth]{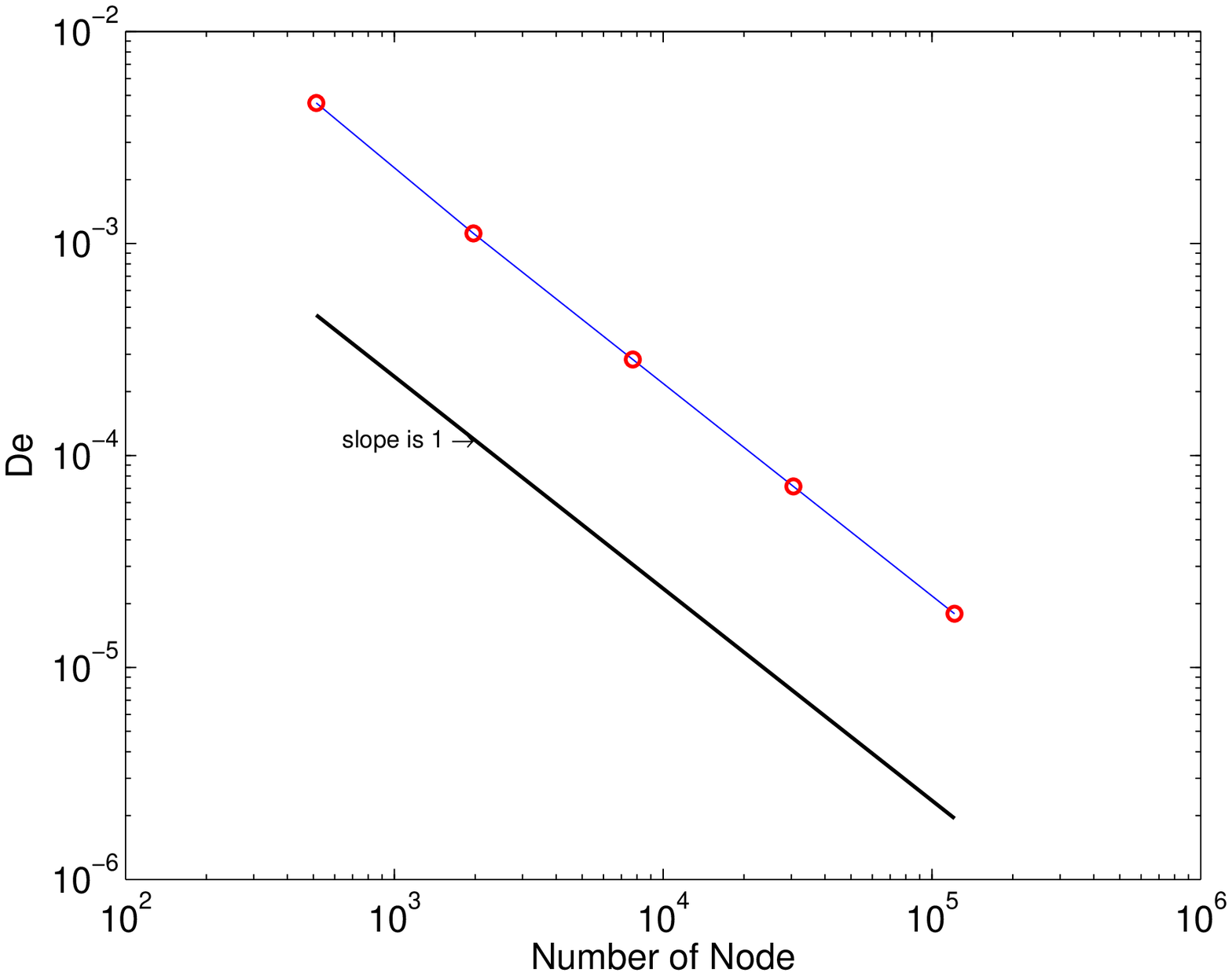}
   \caption{Quadratic Element: Delaunay Mesh}
\label{ex2d}
\end{minipage}
\end{figure}

{\bf Example 2.}  We consider  the following elliptic equation
\begin{equation}
    \begin{cases}
	-\Delta u =2\pi^2\sin\pi x\sin\pi y,&\text{in }
	\Omega=[0, 1] \times [0, 1],\\
	u = 0,&  \text{on } \partial \Omega.
    \end{cases}
    \label{ex3}
\end{equation}
The exact solution is $u(x, y) = \sin(\pi x)\sin(\pi y)$.
First, linear element is considered.
In Table \ref{ex3r}, we report the numerical results for regular pattern
meshes.   All four methods  ultraconverge at a rate of
$O(h^{2})$ in the interior subdomain.
The fact that $H_h^{LS}$ and $H^{ZZ}_h$
perform as good as $H_h$ is not a surprise since it is well known that the
polynomial preserving recovery is the same as weighted average for
uniform triangular mesh of the regular pattern.

The results of  the Chevron pattern is shown in Table \ref{ex3h}.
$H_hu_h$ approximates $Hu$ at  rate $O(h^{2})$ while
$H_h^{LS}u_h$, $H_h^{ZZ}u_h$ and $H_h^{QF}u_h$  approximate $Hu$ at rate $O(h)$.
It is observed that our method out-performs other three Hessian recovery
methods on the Chevron pattern uniform meshes.
To the best of our knowledge, the proposed  PPR-PPR Hessian recovery
is the only  method to achieve $O(h^2)$ superconvergence for linear
element under the Chevron pattern triangular mesh.
%This is not very surprised since other gradient recovery methods such as weighted
%average recovery and Superconvergent Patch Recovery don't share superconvergent
%property with polynomial preserving recovery on Chevron pattern.

\begin{table}[htb!]
\centering
\small
\caption{Example 2: Regular Pattern }
\begin{tabular}{|c|c|c|c|c|c|c|c|c|}
\hline
Dof & $De$ & order& $De^{ZZ}e$ & order& $De^{LS}$ & order& $De^{QF}$ &  order\\ \hline
121&7.93e-001&--&9.73e-001&--&7.93e-001&--&4.01e-001&--\\ \hline
441&2.02e-001&1.06&2.02e-001&1.22&2.02e-001&1.06&1.03e-001&1.05\\ \hline
1681&5.10e-002&1.03&5.10e-002&1.03&5.10e-002&1.03&2.61e-002&1.03\\ \hline
6561&1.28e-002&1.02&1.28e-002&1.02&1.28e-002&1.02&6.53e-003&1.02\\ \hline
25921&3.20e-003&1.01&3.20e-003&1.01&3.20e-003&1.01&1.63e-003&1.01\\ \hline
103041&8.00e-004&1.00&8.00e-004&1.00&8.00e-004&1.00&4.08e-004&1.00\\ \hline
\end{tabular}
\label{ex3r}
\end{table}

\begin{table}[htb!]
\centering
\small
\caption{Example 2: Chevron Pattern }
\begin{tabular}{|c|c|c|c|c|c|c|c|c|}
\hline
Dof & $De$ & order& $De^{ZZ}e$ & order& $De^{LS}$ & order& $De^{QF}$ &  order\\ \hline
121&6.51e-001&--&7.98e-001&--&7.82e-001&--&9.03e-001&--\\ \hline
441&1.34e-001&1.22&2.12e-001&1.03&2.34e-001&0.93&4.30e-001&0.57\\ \hline
1681&3.38e-002&1.03&7.96e-002&0.73&9.87e-002&0.64&2.11e-001&0.53\\ \hline
6561&8.46e-003&1.02&3.57e-002&0.59&4.68e-002&0.55&1.05e-001&0.51\\ \hline
25921&2.11e-003&1.01&1.73e-002&0.53&2.30e-002&0.52&5.23e-002&0.51\\ \hline
103041&5.29e-004&1.00&8.57e-003&0.51&1.15e-002&0.50&2.62e-002&0.50\\ \hline
\end{tabular}
\label{ex3h}
\end{table}

\begin{table}[htb!]
\centering
\small
\caption{Example 2: Criss-cross }
\begin{tabular}{|c|c|c|c|c|c|c|c|c|}
\hline
Dof & $De$ & order& $De^{ZZ}e$ & order& $De^{LS}$ & order& $De^{QF}$ &  order\\ \hline
221&5.49e-001&--&3.57e-001&--&4.40e-001&--&7.14e-001&--\\ \hline
841&1.28e-001&1.09&8.03e-002&1.12&1.04e-001&1.08&6.17e-001&0.11\\ \hline
3281&3.22e-002&1.01&2.01e-002&1.02&2.62e-002&1.01&5.95e-001&0.03\\ \hline
12961&8.06e-003&1.01&5.04e-003&1.01&6.55e-003&1.01&5.90e-001&0.01\\ \hline
51521&2.02e-003&1.00&1.26e-003&1.00&1.64e-003&1.00&5.89e-001&0.00\\ \hline
205441&5.04e-004&1.00&3.15e-004&1.00&4.09e-004&1.00&5.88e-001&0.00\\ \hline
\end{tabular}
\label{ex3c}
\end{table}

\begin{table}[htb!]
\centering
\small
\caption{Example 2: Unionjack Pattern }
\begin{tabular}{|c|c|c|c|c|c|c|c|c|}
\hline
Dof & $De$ & order& $De^{ZZ}e$ & order& $De^{LS}$ & order& $De^{QF}$ &  order\\ \hline
121&1.25e+000&--&8.40e-001&--&9.87e-001&--&1.05e+000&--\\ \hline
441&3.16e-001&1.06&1.77e-001&1.20&2.48e-001&1.07&6.95e-001&0.32\\ \hline
1681&7.96e-002&1.03&4.46e-002&1.03&6.24e-002&1.03&6.14e-001&0.09\\ \hline
6561&2.00e-002&1.02&1.12e-002&1.02&1.56e-002&1.02&5.95e-001&0.02\\ \hline
25921&5.00e-003&1.01&2.80e-003&1.01&3.91e-003&1.01&5.90e-001&0.01\\ \hline
103041&1.25e-003&1.00&6.99e-004&1.00&9.78e-004&1.00&5.89e-001&0.00\\ \hline
\end{tabular}
\label{ex3u}
\end{table}

\begin{table}[htb!]
\centering
\small
\caption{Example 2: Delaunay Mesh }
\begin{tabular}{|c|c|c|c|c|c|c|c|c|}
\hline
Dof & $De$ & order& $De^{ZZ}e$ & order& $De^{LS}$ & order& $De^{QF}$ &  order\\ \hline
139&4.31e-001&--&4.38e-001&--&4.40e-001&--&3.26e-001&--\\ \hline
513&1.38e-001&0.87&2.20e-001&0.53&1.49e-001&0.83&1.79e-001&0.46\\ \hline
1969&5.39e-002&0.70&2.36e-001&-0.05&5.85e-002&0.69&8.88e-002&0.52\\ \hline
7713&2.38e-002&0.60&1.62e-001&0.28&2.55e-002&0.61&4.35e-002&0.52\\ \hline
30529&1.14e-002&0.54&1.13e-001&0.26&1.19e-002&0.56&2.15e-002&0.51\\ \hline
121473&5.59e-003&0.51&7.97e-002&0.25&5.73e-003&0.53&1.07e-002&0.51\\ \hline
\end{tabular}
\label{ex3d}
\end{table}

Then  the Criss-cross pattern mesh is considered and  results are displayed
in Table \ref{ex3c}.  An $O(h^{2})$ convergence rate is observed for
our recovery method,  $H^{LS}_h$ and $H^{ZZ}_h$ while
no convergence rate is observed for  $H^{QF}_h$.
The results for  the Union-Jack pattern mesh is very
similar to  the Criss-cross pattern mesh except that our recovery method
superconverges at  rate $O(h^2)$ as shown in  Table \ref{ex3u}.

Now, we turn to unstructured mesh generated by EasyMesh \cite{easymesh} as
in  the previous examples.  Numerical data are listed in Table \ref{ex3d}.
$H_h$, $H^{LS}_h$ and $H^{QF}_h$ converge at a rate of $O(h^2)$ while
$H_h^{ZZ}$ only converges at a rate of $O(h)$.

 The  results above indicate clearly that our Hessian recovery
method converges at rate $O(h)$ on general Delaunay meshes, which
is  predicted by Theorem \ref{linearsuperconvergent}.
On uniform meshes, we can obtain $O(h^{2})$ ultraconvergence
on an interior sub-domain as predicted by Theorem  \ref{thm:ultra}.

\begin{figure}[!h]
  \centering
  \begin{minipage}[c]{0.5\textwidth}
  \centering
  \includegraphics[width=0.9\textwidth]{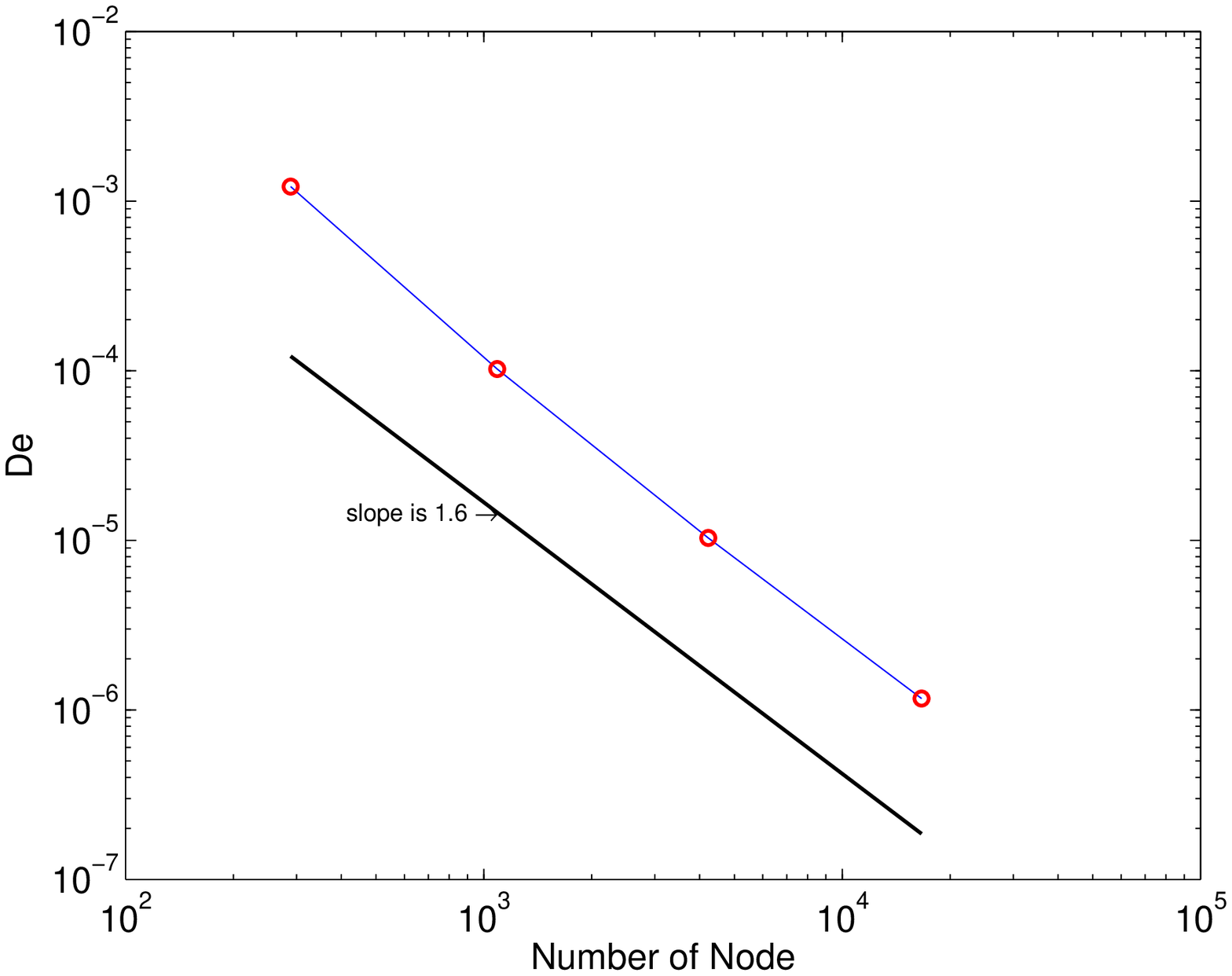}
  \caption{Example 2: Quadratic Regular Pattern}
\label{fig:ex3r}
\end{minipage}%
 \begin{minipage}[c]{0.5\textwidth}
  \centering
  \includegraphics[width=0.9\textwidth]{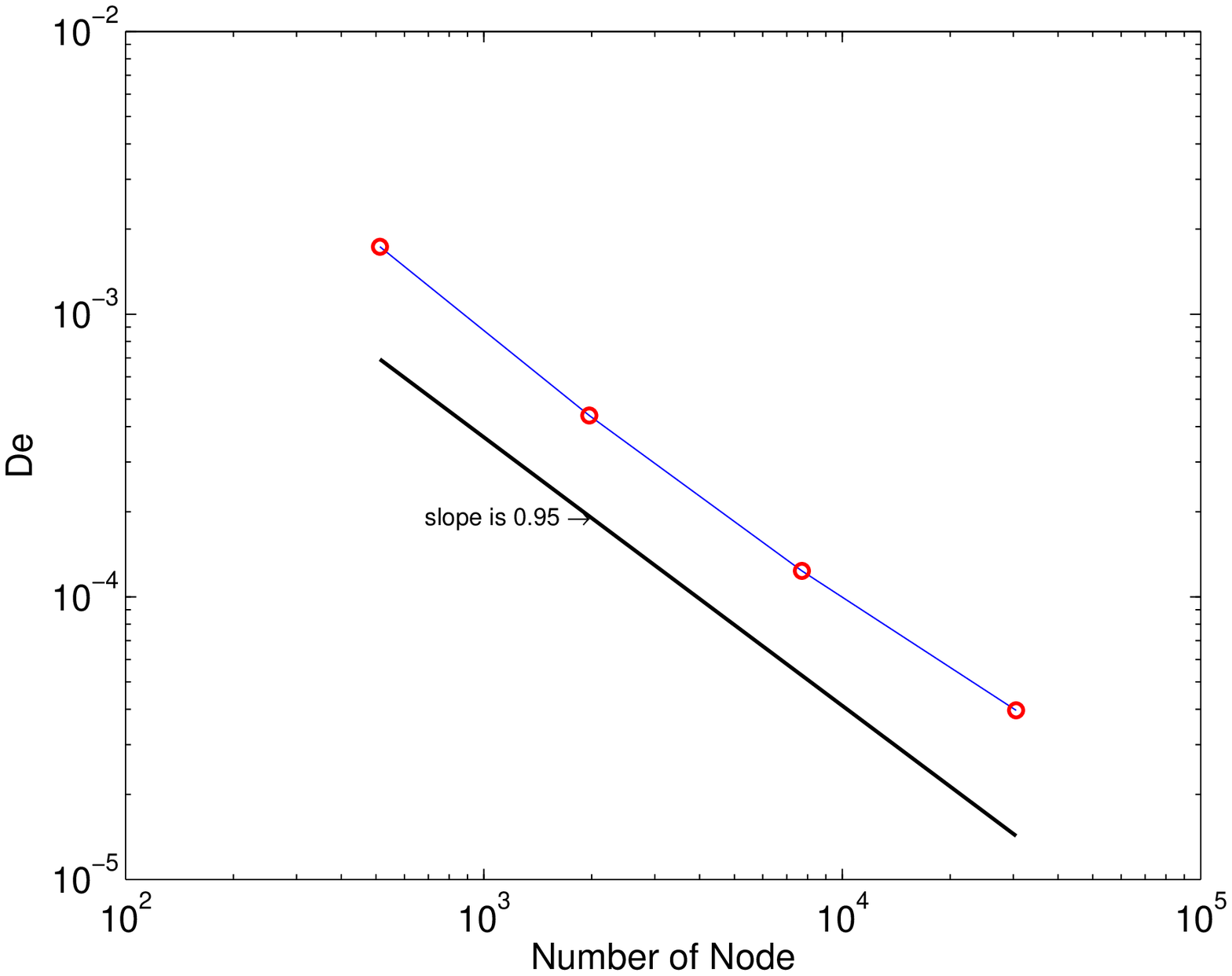}
   \caption{Example 2: Quadratic Delaunay Mesh}
\label{fig:ex3d}
\end{minipage}
\end{figure}

In the end,  we consider quadratic element. Note that our Hessian recovery method
is well defined for arbitrary order elements. However, the extension of
the other  three methods to quadratic element is not
straightforward or even impossible and hence only our method  is implemented here.
We report the numerical results in Figure \ref{fig:ex3r}
for regular pattern uniform mesh.
About $O(h^{3.2})$ order convergence
is observed, which is a bit better than the theoretical result predicted by
Theorem \ref{thm:ultra}.  Figure \ref{fig:ex3d} shows the result
for Delaunay mesh generated by EasyMesh \cite{easymesh}.
About $O(h^{1.9}$ superconvergence is observed.

\section{Concluding remarks}
In this  work, we introduced a  Hessian recovery method for
arbitrary order Lagrange  finite elements. Theoretically, we proved that
 the PPR-PPR Hessian recovery operator $H_h$ preserves polynomials
of degree $k+1$ on general unstructured
 meshes and preserves  polynomials of degree $k+2$ on  translation
invariant  meshes.  This polynomial preserving property, combined with the supercloseness property of
the finite element method, enabled us to prove convergence and superconvergence results for
our Hessian recovery method on mildly structured meshes.
Moreover, we proved the ultraconvergence result for translation invariant
finite element space of any order by using the argument of superconvergence
by difference quotient from \cite{wahlbin1995}.

\end{document}